\newtheorem{thm}{Theorem}[section]
\newtheorem{lem}[thm]{Lemma}
\newtheorem{prop}[thm]{Proposition}
\theoremstyle{definition}
\newtheorem{defi}[thm]{Definition}
\newtheorem{constr}[thm]{Construction}
\newtheorem{rem}[thm]{Remark}
\def\CC{{\mathbb C}}
\def\TT{{\mathbb T}}
\def\ZZ{{\mathbb Z}}
\def\RR{{\mathbb R}}
\def\QQ{{\mathbb Q}}
\def\PP{{\mathbb P}}
\DeclareMathOperator{\Cl}{Cl}
\DeclareMathOperator{\cl}{cl}
\DeclareMathOperator{\cone}{cone}
\DeclareMathOperator{\Spec}{Spec}
\def\bangle#1{{\langle #1 \rangle}}
\begin{document}

\newcommand{\arXivNumber}{2405.04862}

\renewcommand{\PaperNumber}{042}

\FirstPageHeading

\ShortArticleName{On Degenerations of the Projective Plane}

\ArticleName{On Degenerations of the Projective Plane}

\Author{J\"urgen HAUSEN~$^{\rm a}$, Katharina KIR\'{A}LY~$^{\rm a}$ and Milena WROBEL~$^{\rm b}$}

\AuthorNameForHeading{J.~Hausen, K.~Kir\'{a}ly and M.~Wrobel}

\Address{$^{\rm a)}$~Mathematisches Institut, Universit\"at T\"ubingen,\\
\hphantom{$^{\rm a)}$}~Auf der Morgenstelle 10, 72076 T\"ubingen, Germany}
\EmailD{\href{mailto:juergen.hausen@uni-tuebingen.de}{juergen.hausen@uni-tuebingen.de}, \href{mailto:kaki@math.uni-tuebingen.de}{kaki@math.uni-tuebingen.de}}

\Address{$^{\rm b)}$~Institut f\"ur Mathematik, Universit\"at Oldenburg,
26111 Oldenburg, Germany}
\EmailD{\href{mailto:milena.wrobel@uni-oldenburg.de}{milena.wrobel@uni-oldenburg.de}}

\ArticleDates{Received December 17, 2024, in final form June 07, 2025; Published online June 12, 2025}

\Abstract{Complementing results of Hacking and Prokhorov, we determine in an explicit manner all log terminal, rational, degenerations of the projective plane that allow a non-trivial torus action.}

\Keywords{degenerations of the plane; Markov numbers; del Pezzo surfaces; torus action}

\Classification{14L30; 14J26; 14J10; 14D06}

\section{Introduction}

The aim of this note is to determine explicitly
all log terminal, rational degenerations of the~projective plane $\PP_2$
that admit a non-trivial torus action, see
Theorem~\ref{cor:planedegs}; note that log
terminality for a surface merely means to
have at most quotient singularities.
Recall from~\cite{Ma} that a degeneration of $\PP_2$ is the
central fiber $X_0$ of a proper flat analytic family of
surfaces over the unit disk such that $X_t \cong \PP_2$
for all $t \ne 0$.
Manetti~\cite{Ma} characterized the log terminal
degenerations of $\PP_2$
as the projective algebraic complex surfaces of
Picard number one with vanishing plurigenera having
at most singularities of the type $\frac{1}{n^2}(1,na-1)$
with coprime $a,n \in \ZZ_{>0}$, that means quotients
of $0 \in \CC^2$ by the linear action of a cyclic group
of order $n^2$ with the weights $(1,na-1)$.

The description of all normal degenerations of the projective
plane involves the \emph{Markov numbers}.
These are by definition the entries of the \emph{Markov triples}
which in turn are the triples $(x,y,z)$ of positive
integers satisfying the diophantic equation
\[
x^2 + y^2 + z ^2 = 3xyz.
\]
From any Markov triple $(x,y,z)$, one obtains
new ones via mutations, that means by permuting
its entries and passing to $(x,y,3xy-z)$.
The vertices of the \emph{Markov tree} are the
normalized (i.e., ascendingly ordered)
Markov triples, representing all triples obtained
from a given one by permuting its entries.
In the Markov tree, two triple classes are
\emph{adjacent}, that means joined by an edge,
if and only if they are distinct and arise
from each other by a mutation.
\[
\begin{tikzpicture}[scale=0.6]
\sffamily
\node[] (111) at (0,0) {$\scriptscriptstyle (1,1,1)$};
\node[] (112) at (2,0) {$\scriptscriptstyle (1,1,2)$};
\node[] (125) at (4,0) {$\scriptscriptstyle (1,2,5)$};
\node[] (1513) at (6,2) {$\scriptscriptstyle (1,5,13)$};
\node[] (2529) at (6,-2) {$\scriptscriptstyle (2,5,29)$};
\node[] (113134) at (8,3) {$\scriptscriptstyle (1,13,34)$};
\node[] (513194) at (8,1) {$\scriptscriptstyle (5,13,194)$};
\node[] (529533) at (8,-1) {$\scriptscriptstyle (5,29,433)$};
\node[] (229169) at (8,-3) {$\scriptscriptstyle (2,29,169)$};
\draw[] (111) edge (112);
\draw[] (112) edge (125);
\draw[] (125) edge (1513);
\draw[] (125) edge (2529);
\draw[] (1513) edge (113134);
\draw[] (1513) edge (513194);
\draw[] (2529) edge (529533);
\draw[] (2529) edge (229169);
\draw[thick, dotted] (10,2) edge (12,2);
\draw[thick, dotted] (10,-2) edge (12,-2);
\end{tikzpicture}
\]

The Markov tree admits an interpretation in terms of rational
projective complex surfaces.
With any vertex, represented by a Markov triple $(k_1,k_2,l)$,
Hacking and Prokhorov associate the weighted projective plane
\smash{$\PP_{(k_1^2,k_2^2,l^2)}$} and they show that these are in fact
all toric normal degenerations of $\PP_2$,
see~\cite[Corollary~1.2 and Theorem~4.1]{HaPro}.
The edges, joining adjacent Markov triples $(k_1,k_2,l_1)$
and $(k_1,k_2,l_2)$,
are reflected combinatorially by mutations of the simplices
associated with the two weighted projective
planes~\cite{AkGaEtAl,AkKa}
and geometrically by a flat one parameter family having
both of them as fibers~\cite{Hack,Il,Pe};
moreover,~\cite{UrZu} studies the birational geometry
behind the Markov tree.
For an explicit geometric realization of a given edge joining
adjacent Markov triples $(k_1,k_2,l_1)$ and $(k_1,k_2,l_2)$,
one associates with it the following surface,
living in a~weighted projective space, see
also~\cite[Example~7.7]{Hack}:
\[
X(k_1,k_2,l_1,l_2)
 :=
V\bigl(T_1T_2 + T_3^{l_1} + T_4^{l_2}\bigr)
 \subseteq
\PP_{(k_1^2,k_2^2,l_2,l_1)}.
\]
By Theorem~\ref{prop:markovsurface}, the surface
$X(k_1,k_2,l_1,l_2)$ is well defined, quasismooth
(i.e., has at most cyclic quotient singularities),
rational, del Pezzo, of Picard number
one and it comes with an effective $\CC^*$-action.
Moreover, we obtain the following geometric connections
between $X(k_1,k_2,l_1,l_2)$ and the weighted projective
planes \smash{$\PP_{(k_1^2,k_2^2,l_i^2)}$} given by the two
adjacent triples, see also the (more general)
Construction~\ref{constr:covs} and Propositions~\ref{prop:cstarsurf} and
\ref{prop:degprops}.

\begin{thm}
Let $(k_1,k_2,l_1)$ and $(k_1,k_2,l_2)$ be adjacent Markov
triples and consider the surface $X(k_1,k_2,l_1,l_2)$.
Then we have a commutative diagram
\[
\xymatrix{
&&
{\PP_{(k_1^2, k_2^2, l_2, l_1)}}
\ar@{-->}[dll]_{[z_1,z_2,z_4^{l_1}] \mapsfrom [z_1,z_2,z_3,z_4]\qquad\qquad}
\ar@{-->}[drr]^{\qquad\qquad[z_1,z_2,z_3,z_4] \mapsto [z_1,z_2,z_3^{l_2}]}
&&
\\
{\PP_{(k_1^2,k_2^2,l_1^2)}}
&&
X(k_1,k_2,l_1,l_2)
\ar[u]
\ar[ll]^{1:l_1^2 \quad}
\ar[rr]_{\quad l_2^2:1}
&&
{\PP_{(k_1^2,k_2^2,l_2^2)}}
}
\]
with finite coverings
\smash{$X(k_1,k_2,l_1,l_2) \to {\PP_{(k_1^2,k_2^2,l_i^2)}}$}
of degree $l_i^2$, respectively.
Moreover, there are flat families
$\psi_i \colon \mathcal{X}_i \to \CC$,
where
\begin{gather*}
\mathcal{X}_1
 :=
V\bigl(T_1T_2 + ST_3^{l_1} + T_4^{l_2}\bigr)
 \subseteq
\PP_{(k_1^2, k_2^2, l_2, l_1)} \times \CC,
\\
\mathcal{X}_2
 :=
V\bigl(T_1T_2 + T_3^{l_1} + ST_4^{l_2}\bigr)
 \subseteq
\PP_{(k_1^2, k_2^2, l_2, l_1)} \times \CC
\end{gather*}
and the $\psi_i$ are obtained by restricting the projection
\smash{$\PP_{(k_1^2, k_2^2, l_2, l_1)} \times \CC \to \CC$}.
For the fibers of these families, we have
\[
\psi_i^{-1}(s) \cong X(k_1,k_2,l_1,l_2), \quad s \in \CC^*,
\qquad
\psi_i^{-1}(0) \cong {\PP_{(k_1^2,k_2^2,l_i^2)}}.
\]
\end{thm}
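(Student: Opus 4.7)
My plan is to prove the theorem in three stages: (i)~the commutative diagram together with the degrees of the vertical finite coverings, (ii)~flatness of the families~$\psi_i$ and identification of their generic fibers, and (iii)~identification of the special fibers at~$s=0$, which is the main obstacle.

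For stage~(i), the two rational maps from~$\PP_{(k_1^2,k_2^2,l_2,l_1)}$ are well defined because the monomial components~$z_4^{l_1}$ and~$z_3^{l_2}$ carry the correct degrees~$l_1^2$ and~$l_2^2$, respectively. Their indeterminacy loci are the single coordinate points~$[0,0,1,0]$ and~$[0,0,0,1]$, neither of which lies on~$X$: setting three coordinates to zero in~$T_1T_2+T_3^{l_1}+T_4^{l_2}$ forces the remaining one to vanish too, contradicting projective nontriviality. Hence the restrictions to~$X$ give regular morphisms and commutativity of the diagram is automatic by construction. The degrees~$l_i^2$ follow from a generic-fiber count: using the pairwise coprimality of Markov triples (in particular~$\gcd(k_1,k_2)=1$) to normalize a generic target point~$[a,b,c]$ via the~$\CC^*$-action, the~$l_i$-th root choice for the monomial coordinate together with a further~$l_i$-th root choice from the defining equation of~$X$ yields~$l_i^2$ distinct preimages.

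For stage~(ii), each~$\mathcal{X}_i$ is a hypersurface in~$\PP_{(k_1^2,k_2^2,l_2,l_1)}\times\CC$ cut out by a single polynomial whose specialization at any~$s\in\CC$ is a nonzero homogeneous polynomial of the same positive~$T$-degree; no component of~$\mathcal{X}_i$ is fibral, so~$\psi_i$ is flat over~$\CC$. For~$s\in\CC^*$, the automorphism of~$\PP_{(k_1^2,k_2^2,l_2,l_1)}$ rescaling~$T_3\mapsto s^{-1/l_1}T_3$ (any fixed branch of the root) pulls back the equation of~$X$ to the equation of~$\psi_1^{-1}(s)$, yielding the isomorphism~$\psi_1^{-1}(s)\cong X$; the symmetric argument rescaling~$T_4\mapsto s^{-1/l_2}T_4$ handles~$\psi_2$.

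The main obstacle is stage~(iii). My approach is to recognize~$\psi_1^{-1}(0)=V(T_1T_2+T_4^{l_2})$ as a projective toric surface and then read off its weights from the singularity data. Both the~$T_3$-scaling and the antidiagonal scaling~$(T_1,T_2)\mapsto(\mu T_1,\mu^{-1}T_2)$ preserve the defining equation and, together with the main~$\CC^*$-action defining~$\PP_{(k_1^2,k_2^2,l_2,l_1)}$, span a rank-three subgroup of diagonal automorphisms of~$\CC^4$ that descends to an effective two-torus action on the quotient. A direct check shows the torus-fixed points on~$\psi_1^{-1}(0)$ are exactly~$[1,0,0,0]$, $[0,1,0,0]$, and~$[0,0,1,0]$; hence~$\psi_1^{-1}(0)$ is a normal projective toric surface with three fixed points, i.e.\ a weighted projective plane. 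Adjunction applied to the degree-$l_1l_2$ hypersurface, combined with the Markov identity~$k_1^2+k_2^2=l_1l_2$, gives~$-K_{\psi_1^{-1}(0)}=\mathcal{O}(l_1+l_2)|_{\psi_1^{-1}(0)}$, and then~$l_1+l_2=3k_1k_2$ yields~$K^2=9$, matching~$\PP_{(k_1^2,k_2^2,l_i^2)}$. The specific weights are pinned down by analyzing the cyclic quotient singularities at the three torus-fixed points: the two inherited from the ambient weighted projective space match directly, while the third arises at~$[0,0,1,0]$ by combining the~$A_{l_2-1}$-type inner singularity of the affine hypersurface~$V(T_1T_2+T_4^{l_2})$ with the~$\mu_{l_2}$-stabilizer of~$[0,0,1,0]$ in the ambient~$\PP$, producing via the Markov identities exactly the cyclic quotient singularity predicted by~$\PP_{(k_1^2,k_2^2,l_i^2)}$. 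The analogous strategy identifies~$\psi_2^{-1}(0)$ with the corresponding weighted projective plane.
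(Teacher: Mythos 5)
Your stages (i) and (ii) are workable but looser than they should be: for the degree count you must also check that the $l_i\cdot l_i$ root choices yield pairwise distinct points of $X$, i.e., are not identified under the $\CC^*$-action on the source (this uses $\gcd\bigl(k_1^2,k_2^2\bigr)=1$), and a finite covering requires finite fibers everywhere, not just over a general point. The paper instead factors each covering through a fake weighted projective plane with weight system $\bigl(k_1^2,k_2^2,l_i\bigr)$ via an explicit toric morphism of degree $l_i$, and verifies finiteness by a boundary analysis; both routes are legitimate.

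The genuine gap is in stage (iii), which is the crux and which you assert rather than carry out. First, a normal projective toric surface with three fixed points is only a \emph{fake} weighted projective plane; to conclude it is the genuine $\PP_{(k_1^2,k_2^2,l_j^2)}$ you must exclude torsion in the divisor class group, which here follows from primitivity of the fake weight vector (pairwise coprimality of squared Markov numbers), a step you never take. Second, the decisive local computation at $[0,0,1,0]$ does not come out the way your labels suggest: the ambient chart there is $\CC^3/\mu_{l_2}$, the hypersurface $T_1T_2+T_4^{l_2}=0$ is itself a $\CC^2/\mu_{l_2}$ (the $A_{l_2-1}$ point), and the two combine to a small cyclic group of order $l_2^{2}$, not $l_1^{2}$. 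Concretely, for $(k_1,k_2,l_1,l_2)=(1,2,1,5)$ one finds the singularity $\frac{1}{25}(1,4)$ at $[0,0,1,0]$, so $V\bigl(T_1T_2+T_4^{5}\bigr)\subseteq\PP_{(1,4,5,1)}$ is $\PP_{(1,4,25)}$ and not $\PP_{(1,4,1)}$. Thus the family degenerating the $T_3^{l_1}$-term has central fiber $\PP_{(k_1^2,k_2^2,l_2^2)}$ and the one degenerating the $T_4^{l_2}$-term has central fiber $\PP_{(k_1^2,k_2^2,l_1^2)}$; your unspecified subscript ``$i$'' papers over exactly the identification the proof has to establish, and if you pin it down you will find the two central fibers interchanged relative to the displayed formulas you are quoting. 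The paper sidesteps the local analysis entirely by reading off the generator matrices $\tilde P_i$ of the central fibers from the equivariant degeneration machinery and computing their fake weight vectors; if you want a self-contained argument along your lines, you must actually perform the order-$l_j^{2}$ extension computation, check the resulting group is small, and then invoke primitivity of the weight vector.
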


Whereas the degenerations $\mathcal{X}_i \to \CC$
are, as mentioned, well known, the coverings haven't
been observed so far to our knowledge.
Note that there may occur adjacent Markov triples
$(k_1,k_2,l_i)$ with $l_i=1$ for one of the $i$.
This happens if and only if $k_1$ and $k_2$ are
Fibonacci numbers of subsequent odd indices.
In this case, the covering is in fact an isomorphism:
\[
X(k_1,k_2,l_1,l_2) \cong \PP_{(k_1^2,k_2^2,1)}.
\]
If both $l_i$ differ from one, $X(k_1,k_2,l_1,l_2)$
is non-toric.
The \emph{Fibonacci branch} of the Markov tree
hosts the vertices \smash{$\PP_{(1,k_2^2,l^2)}$}
and the edges $X(1,k_2,l_1,l_2)$, where the latter
surfaces also showed~up in~\cite[Remark~6.6]{LiXuZh}.
Our first main result characterizes the surfaces
$X(k_1,k_2,l_1,l_2)$ by their geometric
properties and shows in particular that they
are uniquely determined by the underlying weighted
projective planes.

\begin{thm}
Let $X$ be a non-toric, log terminal,
rational, projective $\CC^*$-surface of Picard
number $\rho(X) = 1$.
Then the following statements are equivalent:
\begin{enumerate}\itemsep=0pt
\item[$(i)$]
The surface $X$ is isomorphic to one of the
surfaces $X(k_1,k_2,l_1,l_2)$.
\item[$(ii)$]
The canonical self intersection number of $X$
is given by $\mathcal{K}_X^2 = 9$.
\end{enumerate}
Moreover, if one of these statements holds, then $X$ is
determined up to isomorphy by the numbers~$k_1$ and $k_2$.
\end{thm}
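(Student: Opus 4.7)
I would split the argument into the two implications and the uniqueness statement, expecting the classification step in the reverse direction to be the main obstacle.

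For $(i)\Rightarrow(ii)$, I would do a direct computation on the weighted projective ambient space. For $X = X(k_1,k_2,l_1,l_2) = V(T_1T_2 + T_3^{l_1} + T_4^{l_2}) \subseteq \PP_{(k_1^2,k_2^2,l_2,l_1)}$, the defining polynomial is homogeneous of degree $d = l_1l_2 = k_1^2 + k_2^2$, where both equalities follow from Vieta applied to $z^2 - 3k_1k_2 z + (k_1^2+k_2^2) = 0$, the quadratic whose roots are the two values of $l$ appearing in the adjacent Markov triples $(k_1,k_2,l_i)$. Adjunction on the ambient weighted projective space gives $\mathcal{K}_X = \mathcal{O}_X(d - (k_1^2 + k_2^2 + l_1 + l_2)) = \mathcal{O}_X(-(l_1 + l_2)) = \mathcal{O}_X(-3k_1k_2)$, and the standard intersection formula for a quasismooth hypersurface in weighted projective space yields
\[
\mathcal{K}_X^2 \;=\; \frac{(3k_1k_2)^2 \cdot l_1l_2}{k_1^2 k_2^2 \cdot l_1l_2} \;=\; 9.
\]

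For $(ii)\Rightarrow(i)$, I would first upgrade the hypotheses: with $\rho(X) = 1$ and $\mathcal{K}_X^2 > 0$, the canonical class is a nonzero multiple of an ample generator, and rationality forbids $\mathcal{K}_X$ itself from being ample, so $-\mathcal{K}_X$ is ample and $X$ is log del Pezzo. Now the non-trivial $\CC^*$-action together with non-toricity and Picard number one activates the classification machinery of Construction~\ref{constr:covs} and Propositions~\ref{prop:cstarsurf}, \ref{prop:degprops}: $X$ is exhibited as a quasismooth trinomial hypersurface $V(T_1T_2 + T_3^{a_3} + T_4^{a_4}) \subseteq \PP_{(w_1,w_2,w_4,w_3)}$ with the $\CC^*$-action coming from the grading, the numerical data $(w_i, a_j)$ controlling both the singularities and the canonical class.

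Running the $(i)\Rightarrow(ii)$ calculation in reverse on this presentation turns $\mathcal{K}_X^2 = 9$ into a single Diophantine identity in the weights and exponents. Combining it with the log-terminality constraints, which restrict every singularity to Manetti's form $\tfrac{1}{n^2}(1, na-1)$, should force $w_1 = k_1^2$ and $w_2 = k_2^2$ for coprime positive integers $k_1, k_2$, and then identify $(a_3, a_4) = (l_1, l_2)$ with the two roots of $z^2 - 3k_1k_2 z + (k_1^2 + k_2^2) = 0$, so that $(k_1,k_2,l_1)$ and $(k_1,k_2,l_2)$ are adjacent Markov triples and $X \cong X(k_1,k_2,l_1,l_2)$. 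Uniqueness is then automatic: once $k_1, k_2$ are fixed, Vieta pins down $\{l_1, l_2\}$, hence the four numbers defining the surface. As expected, the real work sits in the classification step at the start of $(ii)\Rightarrow(i)$ — producing the trinomial presentation of $X$ and matching its singularity data with Manetti's list; once this is in place, the reduction to the Markov equation and the uniqueness statement are essentially bookkeeping.
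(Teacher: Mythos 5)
Your forward direction is fine and is a genuinely different (and cleaner) route than the paper's: you use adjunction and the degree formula for a quasismooth well-formed hypersurface in $\PP_{(k_1^2,k_2^2,l_2,l_1)}$, whereas the paper computes $\mathcal{K}_X^2$ via the general $\CC^*$-surface formula of Proposition~\ref{prop:cstarsurf} and the identity $l_1+l_2=3k_1k_2$. The reverse direction, however, has a genuine gap right where you place the ``classification machinery''. You write that log terminality ``restricts every singularity to Manetti's form $\tfrac{1}{n^2}(1,na-1)$''; it does not. Log terminal surface singularities are arbitrary quotient singularities, including the non-cyclic dihedral and exceptional types, and for a rational $\CC^*$-surface the trinomial presentation $V(T_1T_2+T_3^{l_1}+T_4^{l_2})$ with its attendant singularity control is available only for \emph{quasismooth} surfaces (Proposition~\ref{prop:allqs}). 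The whole point of Proposition~\ref{prop:notqs} is to show that log terminality together with $\rho(X)=1$ and $\mathcal{K}_X^2=9$ already forces quasismoothness; its proof is a case analysis over the platonic triples of types $D$ and $E$, in which one branch requires showing that $a(9a-4)$ is never a perfect square. Nothing in your plan produces this step, and without it you cannot legitimately write $X$ as a trinomial hypersurface at all.

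A second, smaller, gap: even granting the trinomial presentation, ``running the calculation in reverse'' does not reduce to Vieta and bookkeeping. One must first rule out torsion in $\Cl(X)$ (equivalently $\gcd(l_1,l_2)=1$ and $d_0=-1$ in the paper's normal form -- a priori the ambient space is only a \emph{fake} weighted projective space), and one must then solve the resulting Diophantine condition. The paper does this by degenerating $X$ to the toric surfaces $\tilde Z_i$ of Construction~\ref{constr:degs}, invoking $\mathcal{K}_{\tilde Z_i}^2=\mathcal{K}_X^2=9$, and applying Lemma~\ref{lem:diophantic}, which asserts that the only positive integer solutions of $(w_0+w_1+w_2)^2=9w_0w_1w_2$ are the squared Markov triples; that lemma is proved by a descent in the Markov tree and is the number-theoretic heart of the argument. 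Your phrase ``should force $w_1=k_1^2$ and $w_2=k_2^2$'' is exactly this lemma, left unproved. The uniqueness supplement also needs the uniqueness of $(d_1,d_2)$ from Lemma~\ref{prop:markovd1d2}, not just Vieta's formulas for $l_1,l_2$, since the surface is determined by the full matrix $P(\mu)$ and not only by the exponents.
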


Let us refer to the representatives \smash{$\PP_{(k_1^2,k_2^2,l_i^2)}$}
of the vertices of the Markov tree as the
\emph{toric Markov surfaces}
and to the representatives $X(k_1,k_2,l_1,l_2)$
of the edges as the \emph{Markov $\CC^*$-surfaces}.
Then the second main result of this note,
Theorem~\ref{cor:planedegs}, provides
us with the following statement on degenerations
of the projective plane $\PP_2$ in the sense of
Manetti~\cite[Definition~1]{Ma}.

\goodbreak

\begin{thm}
Let $X$ be a log terminal,
rational, projective surface
with a non-trivial torus action.
Then the following statements are equivalent:
\begin{enumerate}\itemsep=0pt
\item[$(i)$]
$X$ is a degeneration of the projective
plane.
\item[$(ii)$]
We have $\rho(X) = 1$ and $\mathcal{K}_X^2 = 9$.
\item[$(iii)$]
$X$ is a toric Markov surface or a
Markov $\CC^*$-surface.
\end{enumerate}
\end{thm}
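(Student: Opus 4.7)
The plan is to run the cycle (i) $\Rightarrow$ (ii) $\Rightarrow$ (iii) $\Rightarrow$ (i), combining Manetti's characterization of log terminal degenerations of $\PP_2$ recalled in the introduction with the Hacking--Prokhorov classification of toric such degenerations and with the preceding two main theorems of this paper.

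For (i) $\Rightarrow$ (ii), any log terminal degeneration of $\PP_2$ is the central fiber of a proper flat $\QQ$-Gorenstein family with general fiber $\PP_2$, so the canonical self-intersection is deformation invariant and gives $\mathcal{K}_X^2 = \mathcal{K}_{\PP_2}^2 = 9$; the equality $\rho(X)=1$ is part of Manetti's description. For (ii) $\Rightarrow$ (iii), I would split into cases according to whether the given non-trivial $\CC^*$-action on $X$ extends to an effective two-torus action. If it does, $X$ is a complete toric surface with log terminal singularities and Picard number one, hence a weighted projective plane $\PP_{(a,b,c)}$; the constraint $\mathcal{K}_X^2=9$ then forces $(a,b,c)=(k_1^2, k_2^2, l^2)$ for some Markov triple via \cite[Theorem~4.1]{HaPro}. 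In the remaining non-toric case, the characterization of Markov $\CC^*$-surfaces via $\mathcal{K}_X^2=9$ established just before this statement applies verbatim, yielding $X \cong X(k_1,k_2,l_1,l_2)$ for some edge of the Markov tree.

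The direction (iii) $\Rightarrow$ (i) is where Manetti's theorem enters. For toric Markov surfaces this is \cite[Corollary~1.2]{HaPro}. For a Markov $\CC^*$-surface $X(k_1,k_2,l_1,l_2)$, Theorem~\ref{prop:markovsurface} provides projectivity, Picard number one, rationality, the log del Pezzo property (so $-\mathcal{K}_X$ is ample and hence all plurigenera vanish), and quasismoothness with only cyclic quotient singularities; by Manetti~\cite{Ma} it then suffices to verify that these cyclic quotients are of the strict form $\frac{1}{n^2}(1,na-1)$. This reduces to a local computation in the affine charts of $\PP_{(k_1^2, k_2^2, l_2, l_1)}$ at the $\CC^*$-fixed points of $X$, exploiting the pairwise coprimality of the entries of a Markov triple.

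The main obstacle is precisely this last singularity calculation: one has to confirm that every quotient singularity of $X(k_1,k_2,l_1,l_2)$ satisfies the numerical conditions of a Manetti singularity, which is a number-theoretic exercise tied to the Markov equation $x^2+y^2+z^2=3xyz$. Once it is in place, the three implications close up from Manetti's theorem, the Hacking--Prokhorov classification and the preceding theorems of the paper with no further input.
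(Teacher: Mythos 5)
Your proposal is correct and follows essentially the same route as the paper: (i)$\Rightarrow$(ii) via Manetti, (ii)$\Rightarrow$(iii) by splitting into the toric case (which the paper handles with Proposition~\ref{prop:toricmarkovchar}, proved via the diophantine Lemma~\ref{lem:diophantic}, rather than by citing Hacking--Prokhorov, whose Theorem~4.1 classifies degenerations and would be somewhat circular at this point) and the non-toric case (Theorem~\ref{thm:markovsurfchar}), and (iii)$\Rightarrow$(i) by checking the singularity condition in Manetti's Main Theorem. The one step you flag as the main obstacle --- verifying that the cyclic quotient singularities are of type $\frac{1}{n^2}(1,na-1)$ --- is not actually open: it is already stated and proved in Proposition~\ref{prop:tmssingularities} and Theorem~\ref{prop:markovsurface}, via Lemma~\ref{label:tsing}, which characterizes these singularities by the condition $\cl(u)=\iota(u)^2$.
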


As a consequence of this characterization,
given any log terminal, rational degeneration $X$
of the projective plane with precisely one
singularity $x \in X$, we can say the following:
if the local Gorenstein index of $x \in X$ is a
Markov but not a Fibonacci number, then $X$ does
not allow a~non-trivial $\CC^*$-action.

Moreover, let us relate the second main to
\emph{test configurations} of the projective plane.
Roughly speaking, these are $\CC^*$-equivariant
flat families $\psi \colon \mathcal{X} \to \CC$
with general fiber $\psi^{-1}(1) \cong \PP_2$ such
that the $\CC^*$-action on $\CC$ is given by the
multiplication, see~\cite[Definition~A.1]{AsBeEtAl}.
According to~\mbox{\cite[Theorem~A.3]{AsBeEtAl}}, the surfaces
which arise as the central fiber $\psi^{-1}(0)$ of a test
configuration of $\PP_2$ are given up to isomorphy
by the vertices of the Fibonacci branch of
the Markov tree and the edges touching one of the latter
vertices.
All the other vertices and edges from the Markov tree
represent degenerations of the projective plane in the
sense of~\cite[Definition~1]{Ma} which do not occur as the
central fiber of any test configuration of the
projective plane.


\section{Toric Markov surfaces}

We provide the necessary facts on toric Markov surfaces,
that means weighted projective planes given by a squared
Markov triple.
The main observation of the section,
Proposition~\ref{prop:toricmarkovchar},
characterizes the toric Markov surfaces as the
toric surfaces of Picard number one and canonical
self intersection nine.
The reader is assumed to be familiar with the basics of
toric geometry; we refer to~\cite{CoLiSc} for the
background.

\begin{constr}[fake weighted projective spaces as toric varieties]
\label{constr:fwps}
Consider an $n \times (n+1)$ \emph{generator matrix},
that means an integral matrix
\[
P = \begin{bmatrix} v_0 & \dots & v_n \end{bmatrix}
\]
the columns $v_i \in \ZZ^n$ of which are parwise distinct,
primitive and generate $\QQ^n$ as a convex cone.
For each $i = 0, \dots, n$, we obtain a convex,
polyhedral cone
\[
\sigma_i := \cone(v_j; j = 0, \dots, n, j \ne i).
\]
These $\sigma_i$ are the maximal cones of a
fan $\Sigma = \Sigma(P)$ in $\ZZ^n$.
The associated toric variety $Z=Z(P)$ is an
$n$-dimensional \emph{fake weighted projective space}.
\end{constr}

The fake weighted projective spaces turn out to be
precisely the $\QQ$-factorial projective toric
varieties of Picard number one.
In particular, the fake weighted projective
planes are exactly the projective toric surfaces
of Picard number one.
We will also benefit from the following alternative
approach.

\begin{rem}\label{rem:fwpsasquots}
The fake weighted projective spaces $Z(P)$ are quotients
of $\CC^{n+1} \setminus \{0\}$. The matrix $P=(p_{ij})$
from Construction~\ref{constr:fwps} defines a homomorphism
\[
p \colon \ \TT^{n+1} \to \TT^n,
\qquad
(t_0,\dots,t_n)
 \mapsto
\bigl(t_0^{p_{10}} \cdots t_n^{p_{1n}}, \dots , t_0^{p_{n0}} \cdots t_n^{p_{nn}}\bigr),
\]
the subgroup $H := \ker(p) \subseteq \TT^{n+1}$ is a
direct product $\CC^* \times G$ with a finite
subgroup $G \subseteq \TT^{n+1}$ and for the
induced action of $H$ on $\CC^{n+1}$ we have
\[
Z(P) = \bigl(\CC^{n+1} \setminus\{0\}\bigr) / H.
\]
This provides us with \emph{homogeneous coordinates}
as for the classical projective space:
We write $[z_0,\dots, z_n] \in Z(P)$ for the
$H$-orbit through
$(z_0,\dots, z_n) \in \CC^{n+1} \setminus \{0\}$.
\end{rem}

Recall that for a point~$x$ of a normal variety $X$
the \emph{local class group} is the factor group~$\Cl(X,x)$
of the group of all Weil divisors on $X$
modulo those being principal on some open
neighbourhood of~$x$. We denote by $\cl(x)$ the
order of $\Cl(X,x)$.

\begin{rem}
\label{rem:fakewv}
Consider $P = [v_0, \dots, v_n]$ as in
Construction~\ref{constr:fwps} and
the associated $Z=Z(P)$.
The \emph{fake weight vector} associated
with $P$ is
\[
w
=
w(P)
=
(w_0,\dots,w_n)
\in
\ZZ_{>0}^{n+1},
\qquad
w_i := \vert {\det}(v_j; j = 0,\dots,n, j \ne i) \vert.
\]
For the divisor class group and the local class groups
of the toric fixed points $z(i)$, having $i$-th
homogeneous coordinate one and all others zero,
we obtain
\[
\Cl(Z) = \ZZ^n / \mathrm{im}(P^*) \cong \ZZ \oplus \Gamma,
\qquad
\vert \Gamma \vert = \gcd(w_0,\dots,w_n),
\qquad
\cl(z(i)) = w_i.
\]
Moreover, $\Cl(Z) \cong \ZZ \oplus \Gamma$ can
be identified with the character group of
$H \cong \CC^* \times G$ from
Remark~\ref{rem:fwpsasquots} via the isomorphism
$H \cong \Spec \CC[\ZZ^n / \mathrm{im}(P^*)]$.
\end{rem}

\begin{rem}Let $Z=Z(P)$ arise from Construction~\ref{constr:fwps}
and $w = w(P)$ as in Remark~\ref{rem:fakewv}.
Then $\Cl(Z)$ is torsion free if and
only if $w \in \ZZ^{n+1}$ is primitive.
In the latter case, $Z$ equals the
\emph{weighted projective space}
\smash{$\PP_{(w_0, \dots, w_n)}$}.
\end{rem}

\begin{prop}\label{prop:fwppKsquare}
For a fake weighted projective plane $Z = Z(P)$
with fake weight vector $w = w(P) = (w_0,w_1,w_2)$,
the canonical self intersection number is given by
\[
\mathcal{K}_Z^2
 =
\frac{(w_0+w_1+w_2)^2}{w_0w_1w_2}.
\]
\end{prop}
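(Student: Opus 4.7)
The plan is to compute $\mathcal{K}_Z^2$ directly by toric intersection theory on the $\QQ$-factorial surface $Z = Z(P)$. Write $D_0, D_1, D_2$ for the torus-invariant prime divisors of $Z$ associated with the rays through $v_0, v_1, v_2$. Since $Z$ is a complete toric variety, the canonical class is represented by $K_Z = -D_0 - D_1 - D_2$, so the task reduces to expanding $(D_0+D_1+D_2)^2$.

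First I would record the mixed intersection numbers. As the three rays form a complete fan in $\QQ^2$, every pair $v_i, v_j$ spans a maximal cone of $\Sigma(P)$, and the standard toric formula for $\QQ$-factorial surfaces gives
\[
D_i \cdot D_j \;=\; \frac{1}{|\det(v_i, v_j)|} \;=\; \frac{1}{w_k}, \qquad \{i,j,k\} = \{0,1,2\}.
\]

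Next I would determine the self intersections. The key input is the vector identity $w_0 v_0 + w_1 v_1 + w_2 v_2 = 0$, which follows from a short Cramer-type computation once the $v_i$ are oriented compatibly with the cyclic order of the fan so that all three determinants $\det(v_j,v_k)$ carry the same sign. Combined with the standard toric linear equivalence $\sum_l \langle m, v_l \rangle D_l \sim 0$ valid for every $m \in \ZZ^2$, a suitable choice of $m$ expresses $D_i$ modulo linear equivalence as a $\QQ$-combination of $D_j$ and $D_k$. Intersecting with $D_i$, substituting the mixed intersection numbers just obtained, and reducing the resulting coefficient through $\sum w_l v_l = 0$, I expect to land on
\[
D_i^2 \;=\; \frac{w_i}{w_j w_k}.
\]

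Finally I would expand $(D_0+D_1+D_2)^2$ and put the three self intersections and the three mixed intersections over the common denominator $w_0 w_1 w_2$. The numerator then reassembles as $w_0^2 + w_1^2 + w_2^2 + 2(w_0w_1 + w_0w_2 + w_1w_2) = (w_0+w_1+w_2)^2$ by the square-of-a-sum identity, giving the claim. The main technical nuisance, rather than a true obstacle, is the bookkeeping of signs: the entries $w_i$ are defined as absolute determinants, whereas the relation $\sum w_l v_l = 0$ is a priori only signed; fixing the cyclic order of the rays once and for all resolves this cleanly, after which everything is formal.
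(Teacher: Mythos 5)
Your proof is correct. The three ingredients all check out: since every pair $v_i,v_j$ spans a maximal cone of $\Sigma(P)$, the standard formula for $\QQ$-factorial toric surfaces gives $D_i\cdot D_j=1/\vert\det(v_i,v_j)\vert=1/w_k$; ordering the rays counterclockwise makes all three signed $2\times 2$ minors of $P$ positive, so Cramer's rule yields $w_0v_0+w_1v_1+w_2v_2=0$ with the genuinely positive $w_i$; and pairing this relation with a linear form $m$ satisfying $\bangle{m,v_i}=1$ (which exists because $v_i$ is primitive) turns the linear equivalence $\sum_l\bangle{m,v_l}D_l\sim 0$ into $D_i^2=w_i/(w_jw_k)$. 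Putting everything over the denominator $w_0w_1w_2$ then reassembles $(w_0+w_1+w_2)^2$ exactly as you say. This is, however, not the route the paper takes: its proof is a two-line reduction that normalizes the generator matrix to the form with rows $(l_0,l_1,l_2)$ and $(d_0,d_1,d_2)$, $l_i\ne 0$, and then cites a ready-made formula from \cite[Remark~3.3]{HaHaSp}, i.e., the computation is outsourced to the $(l_i,d_i)$-parametrization used in the theory of rational $\CC^*$-surfaces. Your argument is self-contained and works directly in terms of the fake weights, which is arguably the more transparent derivation of this particular identity; what the paper's approach buys is consistency with the machinery it uses everywhere else (the same reference supplies the $\mathcal{K}_X^2$ formulas for the non-toric $\CC^*$-surfaces in Proposition~\ref{prop:cstarsurf} and Proposition~\ref{prop:notqs}), at the cost of not being readable in isolation.
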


\begin{proof}
We may assume that the generator matrix $P$
of our fake weighted projective plane $Z$ is
of the form
\[
P
 =
\begin{bmatrix}
l_0 & l_1 & l_2
\\
d_0 & d_1 & d_2
\end{bmatrix},
\qquad
l_i \ne 0, \quad i = 0,1,2.
\]
Then we can use, for instance,~\cite[Remark~3.3]{HaHaSp}
for the computation of the canonical self intersection
number.
\end{proof}

\begin{defi}
By a \emph{toric Markov surface} we mean a surface
isomorphic to a weighted projective plane
$\PP\bigl(k_0^2,k_1^2,k_2^2\bigr)$, where $(k_0,k_1,k_2)$ is
a Markov triple.
\end{defi}

\begin{prop}
\label{prop:toricmarkovchar}
Let $Z$ be a projective toric surface of
Picard number one.
Then the following statements are equivalent:
\begin{enumerate}\itemsep=0pt
\item[$(i)$]
$Z$ is a toric Markov surface.
\item[$(ii)$]
We have $\mathcal{K}_Z^2 = 9$.
\end{enumerate}
\end{prop}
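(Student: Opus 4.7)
The plan is to translate the condition $\mathcal K_Z^2 = 9$ into a diophantine equation on the fake weight vector of $Z$, and then identify the positive integer solutions of that equation with squared Markov triples via a Vieta-jumping descent. Since $Z$ is a projective toric surface of Picard number one, $Z = Z(P)$ is a fake weighted projective plane with some fake weight vector $w = (w_0, w_1, w_2) \in \ZZ_{>0}^{3}$, and by Proposition~\ref{prop:fwppKsquare} condition~(ii) translates into
\[
(w_0 + w_1 + w_2)^2 \;=\; 9\, w_0 w_1 w_2. \qquad (\ast)
\]
For the direction (i)$\Rightarrow$(ii), setting $w_i = k_i^2$ with $(k_0,k_1,k_2)$ a Markov triple gives $w_0+w_1+w_2 = 3 k_0 k_1 k_2$, which squares to $(\ast)$.

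For (ii)$\Rightarrow$(i), I would show by Vieta-jumping descent that every positive integer solution of $(\ast)$ is of the form $(k_0^2, k_1^2, k_2^2)$ for a Markov triple $(k_0, k_1, k_2)$. Reading $(\ast)$ as a quadratic in $w_2$ with $w_0, w_1$ fixed, the two roots satisfy $w_2 w_2' = (w_0+w_1)^2$ and $w_2 + w_2' = 9 w_0 w_1 - 2(w_0+w_1)$, so the second root $w_2'$ is a positive integer. The key arithmetic identity I need is that this jump is precisely the componentwise square of the Markov mutation: writing $k_2' := 3 k_0 k_1 - k_2$, the Markov equation yields $k_0^2+k_1^2 = k_2(3 k_0 k_1 - k_2) = k_2 k_2'$, hence $(k_0^2+k_1^2)^2 = k_2^2 (k_2')^2$. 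Now order $w_0 \le w_1 \le w_2$; from $(w_0+w_1+w_2)^2 \le 9 w_2^2$ one obtains $w_0 w_1 \le w_2$. If $w_2 > w_0+w_1$, then $w_2' = (w_0+w_1)^2/w_2 < w_2$, and the Vieta jump produces a strictly smaller solution, which by induction on $w_0+w_1+w_2$ is a squared Markov triple; applying the jump once more yields the claim for $(w_0,w_1,w_2)$. If $w_2 \le w_0+w_1$, the combined inequalities $w_0 w_1 \le w_2 \le w_0 + w_1$ force $(w_0-1)(w_1-1) \le 1$, so $w_0 \in \{1,2\}$, and a brief enumeration rules out every triple except $(1,1,1) = (1^2, 1^2, 1^2)$.

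To conclude, I invoke the standard fact that the entries of a Markov triple are pairwise coprime, so $w = (k_0^2, k_1^2, k_2^2)$ is primitive; by the remark on primitivity following Remark~\ref{rem:fakewv}, the surface $Z$ then agrees with the honest weighted projective plane $\PP_{(k_0^2, k_1^2, k_2^2)}$ and is a toric Markov surface. The main obstacle is the Vieta descent, especially the identity $k_0^2+k_1^2 = k_2 k_2'$ that matches the jump on $(\ast)$ with the Markov mutation and so ensures inductively that each $w_i$ is a square of a Markov number; the base-case enumeration and the primitivity step are then routine.
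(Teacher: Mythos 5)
Your proof is correct and follows essentially the same route as the paper: both reduce $\mathcal{K}_Z^2=9$ via Proposition~\ref{prop:fwppKsquare} to the diophantine equation $(w_0+w_1+w_2)^2=9w_0w_1w_2$ and then run a descent on its positive integer solutions, your Vieta jump $w_2\mapsto 9w_0w_1-2(w_0+w_1)-w_2$ being exactly the paper's involution $\lambda$ in Lemma~\ref{lem:diophantic} (since $3\sqrt{u_0u_1u_2}=u_0+u_1+u_2$). Your explicit use of the identity $k_0^2+k_1^2=k_2k_2'$ to match the jump with the squared Markov mutation, and the final primitivity remark identifying $Z$ with the honest weighted projective plane, spell out details the paper leaves to the reader, but the argument is the same.
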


The proof relies on the following (known)
elementary statement on the positive integer
solutions of the ``squared Markov identity'';
see also~\cite{GyMa} for recent, further going
work in that direction.

\goodbreak

\begin{lem}
\label{lem:diophantic}
The positive integer solutions of $(w_0+w_1+w_2)^2 = 9w_0w_1w_2$
are precisely the triples $\bigl(k_0^2,k_1^2,k_2^2\bigr)$,
where $(k_0,k_1,k_2)$ is a Markov triple.
\end{lem}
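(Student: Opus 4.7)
The easy direction is immediate: squaring the Markov identity $k_0^2+k_1^2+k_2^2 = 3k_0k_1k_2$ yields $(k_0^2+k_1^2+k_2^2)^2 = 9k_0^2k_1^2k_2^2$, which is exactly $(w_0+w_1+w_2)^2 = 9w_0w_1w_2$ for $w_i := k_i^2$. For the converse, my plan is a Vieta-jumping descent on positive integer solutions, ordered as $w_0 \le w_1 \le w_2$. Rewriting the equation as a quadratic in $w_2$,
\[
w_2^{2} - (9w_0w_1 - 2w_0 - 2w_1)\,w_2 + (w_0+w_1)^2 \;=\; 0,
\]
the second root $w_2' = (w_0+w_1)^2/w_2$ is a positive integer (its sum with $w_2$ is an integer and its product with $w_2$ is positive), and $(w_0,w_1,w_2')$ is again a solution.

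The main step is to check that the descent is strict, namely that $w_0+w_1 < w_2$ unless $(w_0,w_1,w_2)=(1,1,1)$. From $(w_0+w_1+w_2)^2 \le (3w_2)^2$, the equation forces $w_0w_1 \le w_2$. If $w_0,w_1 \ge 2$, then $w_0+w_1 \le w_0w_1 \le w_2$; equality throughout would impose $w_0=w_1=2$, $w_2=4$, which fails the identity. If $w_0=1$ and $w_1 < w_2$, substituting $w_2 = 1+w_1$ into the identity leads to $w_1 = 4/5$, hence in fact $1+w_1 < w_2$ strictly; the remaining case $w_0 = 1$, $w_1 = w_2$ reduces the identity to $5w_1^2-4w_1-1=0$, again forcing $(1,1,1)$.

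With strict descent in hand, I would induct on $w_2$. The base case $(1,1,1)$ is the square of the Markov triple $(1,1,1)$. For the inductive step, $(w_0,w_1,w_2')$ is by induction $(k_0^2, k_1^2, (k_2')^2)$ for some Markov triple $(k_0,k_1,k_2')$; then rearranging the Markov identity as $k_0^2+k_1^2 = k_2'(3k_0k_1-k_2')$ gives
\[
w_2 \;=\; \frac{(w_0+w_1)^2}{w_2'} \;=\; \left(\frac{k_0^2+k_1^2}{k_2'}\right)^{2} \;=\; (3k_0k_1-k_2')^{2},
\]
so $(w_0,w_1,w_2) = (k_0^2, k_1^2, k_2^{2})$ with $k_2 := 3k_0k_1-k_2'$ the Markov mutation of $k_2'$, completing the induction.
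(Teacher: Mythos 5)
Your proposal is correct and follows essentially the same route as the paper: the paper's involution $\lambda(u)=\bigl(u_0,u_1,\bigl(3\sqrt{u_0u_1}-\sqrt{u_2}\bigr)^2\bigr)$ is exactly your Vieta jump $w_2\mapsto (w_0+w_1)^2/w_2$, and both arguments descend every solution to $(1,1,1)$. The only difference is cosmetic: you verify strictness of the descent by an elementary case analysis, whereas the paper does so via an inequality estimate.
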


\begin{proof}
Clearly, every squared Markov triple solves the equation.
For the converse, we build up the analogue of the
Markov tree. Consider the involution
\[
\lambda \colon \ (u_0,u_1,u_2)
 \mapsto
\bigl(u_0, u_1, 9u_0u_1-6\sqrt{u_0u_1u_2}+u_2\bigr)
 =
\bigl(u_0, u_1, \bigl(3\sqrt{u_0u_1}-\sqrt{u_2}\bigr)^2\bigr).
\]
If $u$ is a positive integer solution, then also $\lambda(u)$
is one. Moreover, if the entries of~$u$ are squares,
then the entries of $\lambda(u)$ are so.
Starting with $(1,1,1)$, we obtain
\[
\begin{tikzpicture}[scale=0.6]
\sffamily
\node[] (111) at (0,0) {$\scriptscriptstyle (1,1,1)$};
\node[] (112) at (2,0) {$\scriptscriptstyle (1,1,4)$};
\node[] (125) at (4,0) {$\scriptscriptstyle (1,4,25)$};
\node[] (1513) at (6,2) {$\scriptscriptstyle (1,25,169)$};
\node[] (2529) at (6,-2) {$\scriptscriptstyle (4,25,841)$};
\node[] (113134) at (8,3) {$\scriptscriptstyle (1,169,1156)$};
\node[] (513194) at (8,1) {$\scriptscriptstyle (25,169,37636)$};
\node[] (529533) at (8,-1) {$\scriptscriptstyle (25,841,187489)$};
\node[] (229169) at (8,-3) {$\scriptscriptstyle (4,841,28561)$};
\draw[] (111) edge (112);
\draw[] (112) edge (125);
\draw[] (125) edge (1513);
\draw[] (125) edge (2529);
\draw[] (1513) edge (113134);
\draw[] (1513) edge (513194);
\draw[] (2529) edge (529533);
\draw[] (2529) edge (229169);
\draw[thick, dotted] (10,2) edge (12,2);
\draw[thick, dotted] (10,-2) edge (12,-2);
\end{tikzpicture}
\]
by successively applying $\lambda$ to permutations of
triples obtained so far.
One directly checks that this yields the squared triples
of the Markov tree. We claim
\[
u_0 \le u_1 \le u_2, \ u_2 \ge 3
\ \implies \
(3\sqrt{u_0u_1}-\sqrt{u_2})^2
 < u_2
\]
for any positive integer solution $u = (u_0,u_1,u_2)$.
Suppose that we have ``$\ge$'' on the right hand side.
Then $9u_0^2u_1^2 \ge 4u_0u_1u_2$ and we obtain
\[
(u_0+u_1+u_2)^2
 =
9u_0u_1u_2
 \ge
4u_2^2.
\]
Consequently $u_0+u_1 \ge u_2$. This in turn gives
us $2(u_0+u_1) \ge u_0+u_1+u_2$ and the claim directly
follows from the estimate
\[
\frac{3}{u_2} + \frac{1}{u_0}
 \ge
\frac{u_0}{u_1u_2} + \frac{2}{u_2} + \frac{u_1}{u_0u_2}
 =
\frac{(u_0+u_1)^2}{u_0u_1u_2}
 \ge
\frac{1}{4} \frac{(u_0+u_1+u_2)^2}{u_0u_1u_2}
 =
\frac{9}{4}.
\]
We conclude that every positive integer solution
of $(w_0+w_1+w_2)^2 = 9w_0w_1w_2$ arises from
$(1,1,1)$ by successively applying $\lambda$ to
permutations of triples.
\end{proof}

\begin{proof}[Proof of Proposition~\ref{prop:toricmarkovchar}]
Consider $Z = Z(P)$ as in Construction~\ref{constr:fwps}.
Then, with $w=w(P)$, Proposition~\ref{prop:fwppKsquare}
tells us
\[
\mathcal{K}_Z^2
 =
\frac{(w_0+w_1+w_2)^2}{w_0w_1w_2}.
\]
The implication ``(i) $\Rightarrow$ (ii)''
is a direct consequence.
For the reverse direction, we additionally
use Lemma~\ref{lem:diophantic}.
\end{proof}

We take a brief look at the singularities of
the toric Markov surfaces; we refer
to~\cite[Sections~2 and~4]{HaPro} for a
comprehensive, more general treatment.
Let $k,p$ be coprime positive integers, denote
by \smash{$C\bigl(k^2\bigr) \subseteq \CC^*$} the group of $k^2$-th
roots of unity and consider the action
\[
C\bigl(k^2\bigr) \times \CC^2 \to \CC^2,
\qquad
\zeta \cdot z = \bigl(\zeta z_1, \zeta^{pk-1} z_2\bigr).
\]
Then \smash{$U := \CC^2/C\bigl(k^2\bigr)$} is an affine toric surface
and the image $u \in U$ of $0 \in \CC^2$ is singular
as soon as $k>1$.
A \emph{singularity of type} $\frac{1}{k^2}(1,pk-1)$
is a surface singularity locally isomorphic to $u \in U$.
The \emph{local Gorenstein index} $\iota(x)$ of a point
$x$ in a normal variety $X$ is the order of the canonical
class in the local class group $\Cl(X,x)$.

\begin{prop}
\label{prop:tmssingularities}
The fixed points $z(i) \in Z$, $i = 0,1,2$, of a toric
Markov surface $Z = \PP\bigl(k_0^2,k_1^2,k_2^2\bigr)$ are of
local Gorenstein index $k_i$ and singularity type
\smash{$\frac{1}{k_i^2}(1,p_ik_i-1)$}.
\end{prop}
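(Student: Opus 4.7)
The plan is to read off both assertions from the standard presentation of a weighted projective plane as a cyclic quotient of $\CC^2$ near each of its toric fixed points. Two elementary arithmetic facts about a Markov triple $(k_0,k_1,k_2)$ drive the argument: its entries are pairwise coprime (a routine induction on the Markov tree), and no Markov number is divisible by three. The second fact is a direct mod-three check: reducing $k_0^2+k_1^2+k_2^2=3k_0k_1k_2$ modulo three and using that nonzero squares mod three equal one forces each $k_\nu$ to be coprime to three.

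Fixing $i$ and setting $\{j,k\}=\{0,1,2\}\setminus\{i\}$, Remark~\ref{rem:fwpsasquots} identifies an affine neighbourhood of $z(i)$ in $Z=\PP_{(k_0^2,k_1^2,k_2^2)}$ with the quotient $\CC^2/C(k_i^2)$, where a chosen generator $\zeta$ acts by $\zeta\cdot(z,w)=(\zeta^{k_j^2}z,\zeta^{k_k^2}w)$. Since $\gcd(k_j^2,k_i^2)=1$, replacing $\zeta$ by $\zeta^m$ with $mk_j^2\equiv 1\pmod{k_i^2}$ brings the action into the standard form $\frac{1}{k_i^2}(1,q)$ with $q\equiv k_k^2(k_j^2)^{-1}\pmod{k_i^2}$. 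Rewriting the Markov equation as $k_j^2+k_k^2=3k_0k_1k_2-k_i^2$ shows $k_i\mid k_j^2+k_k^2$, so
\[
q+1 \equiv (k_j^2+k_k^2)(k_j^2)^{-1} \equiv 0 \pmod{k_i}.
\]
Hence $q=p_ik_i-1$ for some integer $p_i\equiv 3k_kk_j^{-1}\pmod{k_i}$, and the coprimality preliminaries give $\gcd(p_i,k_i)=1$. This yields the stated singularity type.

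For the local Gorenstein index, I invoke the standard fact that a cyclic quotient singularity of type $\frac{1}{n}(1,q)$ has Gorenstein index $n/\gcd(1+q,n)$. In our case $1+q=p_ik_i$ and $\gcd(p_ik_i,k_i^2)=k_i\gcd(p_i,k_i)=k_i$, so the Gorenstein index equals $k_i^2/k_i=k_i$. The only slightly delicate point is verifying $\gcd(p_i,k_i)=1$, which hinges on no Markov number being divisible by three; the rest is routine unravelling of the toric data together with the Markov identity reduced modulo $k_i$.
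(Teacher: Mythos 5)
Your argument is correct, but it runs in the opposite direction from the paper's. The paper first computes the two numerical invariants abstractly: $\cl(z(i))=k_i^2$ comes from the fake weight vector (Remark~\ref{rem:fakewv}), and $\iota(z(i))=k_i$ is obtained as the order of the canonical class $w_Z=3k_0k_1k_2$ in $\Cl(Z,z(i))=\ZZ/k_i^2\ZZ$, using exactly the two arithmetic facts you isolate (pairwise coprimality and $3\nmid k_\nu$). The identity $\cl(z(i))=\iota(z(i))^2$ then forces the singularity type via the prepared Lemma~\ref{label:tsing}, which characterizes type $\frac{1}{k^2}(1,pk-1)$ by that numerical condition. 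You instead write down the explicit cyclic quotient presentation of the affine chart, normalize the weights to $\frac{1}{k_i^2}(1,q)$, and read off $q\equiv -1 \pmod{k_i}$ directly from the Markov equation reduced mod $k_i$; the Gorenstein index is then a consequence of the type via the standard formula $n/\gcd(1+q,n)$. Your route bypasses Lemma~\ref{label:tsing} entirely and yields as a bonus the explicit residue $p_i\equiv 3k_kk_j^{-1}\pmod{k_i}$, whereas the paper's route avoids any chart computation and reuses a lemma it needs elsewhere (in the proof of Theorem~\ref{prop:markovsurface}). Both hinge on the same Markov arithmetic, and both are complete; the only small points to make explicit in yours are that $q$ can be chosen in $\{1,\dots,k_i^2-1\}$ so that $p_i\ge 1$, and that the case $k_i=1$ degenerates harmlessly to a smooth point.
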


\begin{lem}
\label{label:tsing}
Consider an affine toric surface $U$ with fixed
point $u \in U$.
Then $u \in U$ is of type \smash{$\frac{1}{k^2}(1,pk-1)$}
if and only if $\cl(u) = \iota(u)^2$.
\end{lem}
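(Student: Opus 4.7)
The plan is to reduce the lemma to a standard arithmetic identity via the classification of toric surface singularities. Any fixed point $u$ of an affine toric surface $U$ is analytically isomorphic to the origin of a cyclic quotient $\CC^2 / C(n)$ with weights $(1,q)$, where $n = \cl(u)$ and $\gcd(n,q) = 1$; matching with the statement of the lemma then means asking whether we can write $n = k^2$ and $q = pk-1$ with $\gcd(p,k) = 1$.

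For a quotient $\frac{1}{n}(1,q)$ of this form, two standard facts determine the relevant invariants in the spirit of Remark~\ref{rem:fakewv}. First, $\Cl(U,u) \cong \ZZ/n\ZZ$ is generated by the toric boundary divisor $D_2$ with $D_1 \equiv q D_2$, giving $\cl(u) = n$. Second, since $\mathcal{K}_U = -D_1 - D_2$, the canonical class equals $-(q+1) D_2$, whose order in $\ZZ/n\ZZ$ is $\iota(u) = n / \gcd(n, q+1)$.

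The remaining step is purely arithmetic. Substituting the two formulas, the condition $\cl(u) = \iota(u)^2$ rewrites as $\gcd(n, q+1)^2 = n$. This forces $n$ to be a perfect square, $n = k^2$, together with $\gcd(k^2, q+1) = k$; writing $q+1 = pk$, the latter equality is equivalent to $\gcd(p,k) = 1$. Hence $q = pk-1$ with $\gcd(p,k) = 1$, exactly as in the singularity type $\frac{1}{k^2}(1, pk-1)$. The converse direction is the same chain of equalities read backwards, and the only step requiring real care is pinning down $\iota(u)$ through the character computation; the rest is bookkeeping.
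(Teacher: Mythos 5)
Your proof is correct, but it takes a genuinely different route from the paper's. You start from the standard normal form of a toric surface singularity as a cyclic quotient $\frac{1}{n}(1,q)$ with $\gcd(n,q)=1$, read off $\cl(u)=n$ and $\iota(u)=n/\gcd(n,q+1)$ from the presentation $\Cl(U,u)\cong\ZZ/n\ZZ$ with $\mathcal{K}_U\equiv -(q+1)D_2$, and thereby collapse the whole lemma into the arithmetic equivalence $\gcd(n,q+1)^2=n \Leftrightarrow (n,q)=\bigl(k^2,pk-1\bigr)$ with $\gcd(p,k)=1$ --- which you verify correctly in both directions (including the point that $\gcd(k^2,pk)=k$ forces $\gcd(p,k)=1$). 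The paper instead argues directly with generator matrices: for the forward direction it exhibits an explicit covering $\CC^2\to U'$ with $U'$ given by the matrix $\bigl[\begin{smallmatrix}k & k\\ k+b & b\end{smallmatrix}\bigr]$, identifies the kernel with $C\bigl(k^2\bigr)$ acting with weights $(1,pk-1)$, and reads off $\cl(u)=k^2$, $\iota(u)=k$ from the cited Remark~3.7 of [HaHaSp]; for the converse it normalizes the generator matrix so that $\iota(u)=k$ appears in the first row, uses $\vert\det P\vert=k^2$ to pin down the second row, and reconstructs the $C\bigl(k^2\bigr)$-action. Your version is shorter and isolates the arithmetic content cleanly; the paper's version is more hands-on in that it verifies the quotient presentation $\CC^2/C\bigl(k^2\bigr)$ literally as in the definition of the singularity type, staying within the generator-matrix formalism used throughout the rest of the article. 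The only step you should make sure is fully justified is the formula $\iota(u)=n/\gcd(n,q+1)$, i.e., that the local Gorenstein index is the order of $-(q+1)D_2$ in $\ZZ/n\ZZ$; this is standard and your computation of the class group presentation supports it.
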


\begin{proof}
Let $u \in U$ be of type \smash{$\frac{1}{k^2}(1,pk-1)$}.
Choose $a,b \in \ZZ$ such that $ak - bp = 1$.
Consider the affine toric surface $U'$ given by the
generator matrix
\[
P'
 =
\begin{bmatrix}
k & k
\\
k + b & b
\end{bmatrix}.
\]
The corresponding homomorphism $\TT^2 \to \TT^2$,
\smash{$t \mapsto \bigl(t_1^kt_2^k, t_1^{k+b}t_2^b\bigr)$}
extends to a toric morphism $\pi \colon \CC^2 \to U'$.
Moreover, we obtain an isomorphism
\[
C\bigl(k^2\bigr) \to \ker(\pi),
\qquad
\zeta \mapsto \bigl(\zeta, \zeta^{pk-1}\bigr).
\]
We conclude $U' \cong \CC^2/C\bigl(k^2\bigr)$ with $C\bigl(k^2\bigr)$ acting
as needed for type $\frac{1}{k^2}(1,pk-1)$.
Thus $U' \cong U$ and, using \cite[Remark~3.7]{HaHaSp},
we obtain
\[
\cl(u) = \vert {\det}(P') \vert = k^2,
\qquad
\iota(u) = k.
\]
Conversely, assume $\cl(u) = \iota(u)^2$.
The affine toric surface $U$
is given by a generator matrix $P$.
With $k := \iota(u)$, a suitable unimodular transformation
turns $P$ into
\[
P
 =
\begin{bmatrix}
k & k
\\
c & b
\end{bmatrix},
\qquad
\gcd(c,k) = \gcd(b,k)=1.
\]
By assumption, $\cl(u) = \vert {\det}(P) \vert$
equals $\iota(u)^2 = k^2$.
Thus, we may assume $c = k+b$.
Take $a,p \in \ZZ$ with
$ak - bp = 1$ and $p \ge 1$.
Then we have an action
\[
C\bigl(k^2\bigr) \times \CC^2 \to \CC^2,
\qquad
\zeta \cdot z = \bigl(\zeta z_1, \zeta^{pk-1} z_2\bigr).
\]
With similar arguments as above, we verify that
$U$ is the quotient $\CC^2/C\bigl(k^2\bigr)$ for this action
and thus see that $u \in U$ is of type
\smash{$\frac{1}{k^2}(1,pk-1)$}.
\end{proof}

\begin{proof}[Proof of Proposition~\ref{prop:tmssingularities}]
We have $\Cl(Z) = \ZZ$ and the anticanonical class of $Z$ is
given by $w_Z = k_0^2+k_1^2+k_2^2 = 3k_0k_1k_2 \in \ZZ$.
Remark~\ref{rem:fakewv} tells us $\Cl(Z,z(i)) = \ZZ / k_i^2\ZZ$.
As Markov numbers are pairwise coprime and not divisible by 3,
we see that $w_Z$ is of order $k_i$ in $\Cl(Z,z(i))$.
The~assertion follows from Lemma~\ref{label:tsing}.
\end{proof}

\section[Rational projective C*-surfaces]{Rational projective $\boldsymbol{\CC^*}$-surfaces}

We first recall the necessary theory of quasismooth,
rational, projective $\CC^*$-surfaces of Picard number
one; see~\cite[Section~5.4]{ArDeHaLa} and the introductory
part of~\cite{HaHaHaSp} for the general background.
Then, in Construction~\ref{constr:covs}, we exhibit
for each of our $\CC^*$-surfaces two coverings onto
fake weighted projective planes.
Moreover, in Construction~\ref{constr:degs} and
Proposition~\ref{prop:degprops}, we take an explicit
look at the toric degenerations.

A point of a rational
$\CC^*$-surface $X$ is called \emph{quasismooth}
if it is the image of a smooth point of the
characteristic space $\hat X$ over $X$;
see~\cite[Section~5]{HaHaHaSp}.
It is a specific feature of a rational
$\CC^*$-surface that its singular quasismooth
points are precisely its cyclic quotient
singularities; see~\cite[Corollary~6.12]{HaHu}.

\begin{constr}[Quasismooth $\CC^*$-surfaces of Picard number one]
\label{constr:cstarsurf}
Consider an integral $3 \times 4$ matrix of the following shape:
\begin{gather*}
P
 =
\begin{bmatrix}
-1 & -1 & l_1 & 0
\\
-1 & -1 & 0 & l_2
\\
0 & d_0 & d_1 & d_2
\end{bmatrix},
\\
1 \le d_1 \le l_1 \le l_2, \qquad \gcd(l_i,d_i) = 1,
\qquad
d_0 + \frac{d_1}{l_1} + \frac{d_2}{l_2} < 0 < \frac{d_1}{l_1} + \frac{d_2}{l_2}.
\end{gather*}
Let $Z(P)$ denote the fake weighted projective space
having $P$ as its generator matrix, see Construction~\ref{constr:fwps}.
Then we obtain a surface
\[
X(P) := \overline{V(1+S_1+S_2)} \subseteq Z(P),
\]
where $S_1$, $S_2$, $S_3$ are the coordinates on the acting
torus $\TT^3 \subseteq Z$.
The surface $X(P)$ inherits from $Z(P)$ the $\CC^*$-action
given on $\TT^3 \subseteq Z(P)$ by
\[
t \cdot s = (s_1,s_2,ts_3).
\]
\end{constr}

\begin{prop}
\label{prop:cstarsurf}
Consider $X=X(P)$ in $Z=Z(P)$ given by
Construction~$\ref{constr:cstarsurf}$.
Then, in homogeneous coordinates on $Z$,
we have the representation
\[
X
 =
V\bigl(T_1T_2 + T_3^{l_1} + T_4^{l_2}\bigr)
 \subseteq
Z.
\]
The $\CC^*$-surface $X$ is projective, rational,
quasismooth, del Pezzo and of Picard number one.
With any $l_1$-th root $\zeta$ of $-1$, the $\CC^*$-fixed
points of $X$ are
\[
x_0 = [0,0,\zeta,1],
\qquad
x_1 = [0,1,0,0],
\qquad
x_2 = [1,0,0,0].
\]
The fixed point $x_0$ is hyperbolic and $x_1$, $x_2$ are both
elliptic.
There are exactly two non-trivial orbits $\CC^* \cdot z_1$
and $\CC^* \cdot z_2$ with non-trivial isotropy groups:
\[
z_1 = [-1,1,0,1], \quad
\vert \CC^*_{z_1} \vert = l_1,
\qquad
z_2 = [-1,1,1,0], \quad
\vert \CC^*_{z_2} \vert = l_2.
\]
The fake weight vector $w(P) =(w_1,w_2,w_3,w_4)$
of the ambient fake weighted projective space $Z = Z(P)$
is given explicitly in terms of $P$ as
\[
w(P)
 =
(-l_1l_2d_0 - l_2d_1 - l_1d_2, l_2d_1 + l_1d_2, - l_2d_0, -l_1d_0)
 \in
\ZZ_{>0}^4.
\]
Moreover, for the local class group orders of the three
$\CC^*$-fixed points $x_0,x_1,x_2 \in X$, we obtain
\[
\cl(x_0) = -d_0,
\qquad
\cl(x_1) = w_2,
\qquad
\cl(x_2) = w_1.
\]
Finally, the self intersection number of the canonical
divisor $\mathcal{K}_X$ on $X$ can be expressed as
follows:
\[
\mathcal{K}_X^2
 =
\biggl( \frac{1}{w_1} + \frac{1}{w_2} \biggr)
\biggl(2 + \frac{l_1}{l_2} + \frac{l_2}{l_1} \biggr)
 =
\frac{\cl(x_0)}{\cl(x_1)\cl(x_2)} (l_1+l_2)^2.
\]
\end{prop}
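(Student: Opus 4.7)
\emph{Plan.} I would address the claims in the order (i) homogeneous equation, (ii) projectivity/rationality/quasismoothness, (iii) fake weight vector $w(P)$, (iv) fixed points and their type, (v) non-trivial isotropies, (vi) local class group orders, (vii) canonical self intersection (and del~Pezzo property together with Picard number one). For the homogenization I use Remark~\ref{rem:fwpsasquots}: the coordinate $S_j$ of $\TT^3\subseteq Z$ pulls back under the quotient $p\colon\TT^4\to\TT^3$ to the Laurent monomial $\prod_i T_i^{p_{ji}}$, so the first two rows of $P$ give $S_1\leftrightarrow T_3^{l_1}/(T_1T_2)$ and $S_2\leftrightarrow T_4^{l_2}/(T_1T_2)$; clearing denominators in $1+S_1+S_2=0$ produces $T_1T_2+T_3^{l_1}+T_4^{l_2}=0$. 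Projectivity is automatic from closedness in the projective $Z$; rationality is witnessed by the open dense $\TT^2\subseteq X$ parametrized by $(s_1,s_3)$ after eliminating $s_2=-1-s_1$; quasismoothness is the vanishing-Jacobian criterion for $V(T_1T_2+T_3^{l_1}+T_4^{l_2})\subseteq\CC^4$, whose partial derivatives $(T_2,T_1,l_1T_3^{l_1-1},l_2T_4^{l_2-1})$ vanish jointly only at $0$. The fake weight vector then follows from expanding the four $3\times 3$ minors of $P$, with signs fixed by $d_0<0$ (forced by the two inequalities of Construction~\ref{constr:cstarsurf}) and $l_2d_1+l_1d_2>0$.

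For the $\CC^*$-structure, I lift the prescribed torus action $t\cdot s=(s_1,s_2,ts_3)$ to $(t^{\mu_1},t^{\mu_2},t^{\mu_3},t^{\mu_4})$ on $\CC^4$ by solving $P\mu\in\QQ\cdot e_3$; the resulting relation $\mu_1+\mu_2=l_1\mu_3=l_2\mu_4$ ensures that each monomial of $T_1T_2+T_3^{l_1}+T_4^{l_2}$ carries the same $\CC^*$-weight, so the equation is semi-invariant. The candidate fixed points $x_1=[0,1,0,0]$ and $x_2=[1,0,0,0]$ are isolated and lie in $X$, and on the curve $T_1=T_2=0$ the equation becomes $T_3^{l_1}+T_4^{l_2}=0$, yielding the additional fixed point $x_0=[0,0,\zeta,1]$ with $\zeta^{l_1}=-1$. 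Ellipticity of $x_1,x_2$ follows from the tangent weights $(\mu_3,\mu_4)$ having the same sign, while at $x_0$ two distinct $\CC^*$-orbits (one along $T_1=0$, one along $T_2=0$) limit to the same point, identifying $x_0$ as hyperbolic. For the non-trivial isotropies, I solve $t\cdot z_1=h\cdot z_1$ with $h\in H$ at $z_1=[-1,1,0,1]$: the three defining equations of $H$ combined with $\gcd(l_1,d_1)=1$ reduce solvability to $t^{l_1}=1$, so $\CC^*_{z_1}$ is cyclic of order $l_1$, and the case of $z_2$ is symmetric. That these two are the only non-trivial isotropies follows from the combinatorial description of quasismooth $\CC^*$-surfaces in terms of their generator matrix (see \cite{HaHaHaSp}).

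For the local class groups at $x_1,x_2$, I use the toric chart $Z|_{T_2\ne 0}\cong\CC^3/G_2$ with $|G_2|=w_2$ from Remark~\ref{rem:fakewv}: there the equation becomes $T_1=-T_3^{l_1}-T_4^{l_2}$, cutting out a $G_2$-stable smooth plane on which $G_2$ acts faithfully, whence $\cl(x_1)=w_2$ and symmetrically $\cl(x_2)=w_1$. For $x_0$, I pass to the chart $T_4=1$, where $Z\cong\CC^3/G_4$ with $|G_4|=w_4$; the equation $T_1T_2+T_3^{l_1}+1=0$ has $\partial_{T_3}\ne 0$ at $(0,0,\zeta)$, so its zero locus is smooth in $\CC^3$, and the stabilizer of $(0,0,\zeta)$ inside $G_4$ is computed directly from the defining equations of $H$ to be the cyclic subgroup $\{(h^{-1},h,1,1):h^{d_0}=1\}$ of order $-d_0$, acting on the $(T_1,T_2)$-tangent plane with weights $(-1,1)$; hence $\cl(x_0)=-d_0$. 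Finally, for $\mathcal{K}_X^2$ I use adjunction: the equation is homogeneous of degree $d:=w_1+w_2=l_1w_3=l_2w_4=-l_1l_2d_0$ in the torsion-free part of $\Cl(Z)$, so $[X]=dH_Z$, and $\mathcal{K}_X=(d-(w_1+w_2+w_3+w_4))H_Z|_X=d_0(l_1+l_2)H_Z|_X$. Since $d_0<0$ and $H_Z|_X$ is ample, $-\mathcal{K}_X$ is ample (so $X$ is del~Pezzo), and $\rho(X)=1$ because $\Cl(X)$ modulo torsion is generated by $H_Z|_X$. Then $\mathcal{K}_X^2=d_0^2(l_1+l_2)^2\cdot d\cdot H_Z^3$ with $H_Z^3=1/(w_1w_2w_3w_4)$ (the three-dimensional analogue of Proposition~\ref{prop:fwppKsquare}); substituting $d=-l_1l_2d_0$ and $w_3w_4=l_1l_2d_0^2$ yields $\mathcal{K}_X^2=-d_0(l_1+l_2)^2/(w_1w_2)=\cl(x_0)(l_1+l_2)^2/(\cl(x_1)\cl(x_2))$, and using $w_1+w_2=-l_1l_2d_0$ recovers the first stated form.

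The main obstacle will be the local class group computation at the hyperbolic point~$x_0$: unlike $x_1,x_2$ it is not a toric fixed point of the ambient $Z$, so the formula $\cl(z(i))=w_i$ from Remark~\ref{rem:fakewv} does not apply, and one must identify the germ $(X,x_0)$ explicitly as $\CC^2/\ZZ_{|d_0|}$ by combining the smoothness of the defining hypersurface in $\CC^3$ with a direct stabilizer computation inside $H$. A secondary care-point is the rigorous derivation of $\rho(X)=1$, for which I would invoke the Cox-ring description of $\CC^*$-surfaces from the introductory part of~\cite{HaHaHaSp} and~\cite[Section~5.4]{ArDeHaLa}.
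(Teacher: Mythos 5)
Your proposal is essentially correct, but it takes a genuinely different route from the paper. The paper's proof is almost entirely by citation: it checks that $P$ fits \cite[Construction~4.2]{HaHaHaSp}, imports projectivity, rationality, quasismoothness, $\rho(X)=1$ and the fixed-point/isotropy/local-class-group data wholesale from \cite[Propositions~4.5, 4.9, 4.15 and~5.1]{HaHaHaSp}, and only computes $\mathcal{K}_X^2$ explicitly, via the intrinsic intersection formula \cite[Proposition~7.9]{HaHaSp} for $\CC^*$-surfaces together with the identity $\cl(x_1)+\cl(x_2)=l_1l_2\cl(x_0)$. You instead re-derive most claims by hand inside the ambient fake weighted projective threefold: the homogenization via Remark~\ref{rem:fwpsasquots}, the minors of $P$ for $w(P)$, the slice/stabilizer computation at $x_0$ (which correctly identifies the germ as $\CC^2/\ZZ_{-d_0}$ with weights $(\pm 1)$ — this is exactly the point where the toric formula $\cl(z(i))=w_i$ is unavailable, and your workaround is the right one), and adjunction for $\mathcal{K}_X^2$. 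The adjunction computation deserves a remark: $\Cl(Z)$ can have torsion for matrices of Construction~\ref{constr:cstarsurf} (e.g., $l_1=l_2=2$, $d_1=d_2=1$, $d_0=-3$), so neither $H_Z^3=1/(w_1w_2w_3w_4)$ nor $[X]=(w_1+w_2)H_Z$ is literally correct as stated; however, the powers of $\gcd(w_1,\dots,w_4)$ cancel in the product $\bigl((K_Z+X)\cdot H_Z|_X\bigr)^{\!2}\cdot\deg X\cdot H_Z^3$, so your final formula survives — you should say this explicitly rather than rely on the weighted-projective-space case. Two further care points: (a) your Jacobian-criterion argument for quasismoothness presupposes that the punctured affine cone $V(T_1T_2+T_3^{l_1}+T_4^{l_2})\setminus\{0\}$ is the characteristic space of $X$, i.e., the Cox-ring presentation, which is itself one of the facts the paper imports from \cite{HaHaHaSp} — so either cite that or prove it, since otherwise the argument is mildly circular; (b) for $\cl(x_1)=w_2$ you need the stabilizer to act on the smooth surface slice without quasi-reflections (a small subgroup), not merely faithfully, to conclude that the local class group has full order $|G_2|$. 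With these points made precise, your more self-contained argument buys independence from the classification machinery of \cite{HaHaHaSp}, at the cost of length; the paper's route buys brevity at the cost of opacity.
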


\begin{proof}[Proof of Construction~\ref{constr:cstarsurf}
and Proposition~\ref{prop:cstarsurf}]
The assumptions on $l_i$, $d_i$ made in
Construction~\ref{constr:cstarsurf}
ensure that~$P$ fits into the setting of
\cite[Construction~4.2]{HaHaHaSp}.
According to~ \cite[Proposition~4.5]{HaHaHaSp}, the
output $X(P)$ is a normal, rational,
projective $\CC^*$-surface.
Quasismoothness, $\rho(X)=1$ and the
statements on the fixed points are covered
by \cite[Propositions~4.9, 4.15 and~5.1]{HaHaHaSp}.

We are left with the canonical self intersection number.
Using the general formula~\cite[Proposition~7.9]{HaHaSp}
for rational projective $\CC^*$-surfaces, we directly
compute
\[
\mathcal{K}_X^2
 =
\frac{\bigl(\frac{1}{l_1}+\frac{1}{l_2}\bigr)^2}{\frac{d_1}{l_1}+\frac{d_2}{l_2}}
 -
\frac{\bigl(\frac{1}{l_1}+\frac{1}{l_2}\bigr)^2}{d_0+\frac{d_1}{l_1}+\frac{d_2}{l_2}}
 =
\biggl(
\frac{1}{\cl(x_1)}
 +
\frac{1}{\cl(x_2)}
\biggr)
\biggl(2+\frac{l_1}{l_2}+\frac{l_2}{l_1}\biggr),
\]
where $\cl(x_i)$ are the local class group orders
of the fixed points as just determined.
The assertion then follows from
$\cl(x_1)+\cl(x_2)= l_1l_2\cl(x_0)$.
\end{proof}

\begin{prop}
\label{prop:allqs}
Let $X$ be a non-toric, quasismooth, rational, projective $\CC^*$-surface
of Picard number one.
Then $X \cong X(P)$ with $P$ as in Construction~$\ref{constr:cstarsurf}$.
\end{prop}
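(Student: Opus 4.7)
The plan is to invoke the general classification of quasismooth rational projective $\CC^*$-surfaces via $P$-matrices from \cite{HaHaHaSp} and then to use the non-toricity and Picard-number-one hypotheses to pin down the shape of $P$.

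First, I would apply \cite[Construction~4.2, Proposition~4.5]{HaHaHaSp}: every rational projective $\CC^*$-surface $X$ arises as $X(P')$ for a generator matrix $P'$ of a certain standard type, whose columns correspond to the $\CC^*$-invariant prime divisors of $X$, namely the closures of the one-dimensional orbits with non-trivial isotropy together with the source, the sink and any parabolic fixed-point curves. Picard number one forces $P'$ to have exactly one column more than rows, hence $P'$ is a $3 \times 4$ matrix, and only finitely many combinatorial configurations have to be considered. The configurations carrying a parabolic fixed-point curve correspond precisely to the toric case, since the extra curve contributes an additional one-parameter subgroup extending the $\CC^*$-action to a two-torus action. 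The non-toric hypothesis therefore leaves $X$ with two elliptic fixed points connected by two orbit closures through a single hyperbolic fixed point, exactly as in Proposition~\ref{prop:cstarsurf}.

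Next, I would bring $P'$ into the normal form of Construction~\ref{constr:cstarsurf}. The admissible symmetries of the $P$-matrix formalism, namely column permutations within each block, unimodular transformations on the top two rows and integral shears on the last row, are enough to align the two elliptic columns to $(-1,-1,0)^t$ and $(-1,-1,d_0)^t$ and to write the two orbit columns in the form $(l_1,0,d_1)^t$ and $(0,l_2,d_2)^t$. The coprimality $\gcd(l_i, d_i) = 1$ then follows from quasismoothness at the points $z_i$ via \cite[Proposition~4.9]{HaHaHaSp}, while the inequalities
\[
d_0 + \frac{d_1}{l_1} + \frac{d_2}{l_2} < 0 < \frac{d_1}{l_1} + \frac{d_2}{l_2}
\]
encode, respectively, completeness of the fan $\Sigma(P)$ (so that $Z(P)$ is projective) and the existence of two distinct elliptic fixed points on $X$. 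Swapping the two elliptic columns and then the two orbit columns if necessary yields $1 \le d_1 \le l_1 \le l_2$.

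The main obstacle is this normalization step: one has to verify that the $GL_2(\ZZ)$-freedom on the top two rows is rich enough to reach the prescribed top-left $2 \times 2$ block of $-1$'s while simultaneously forcing the right half of $P$ into the ``diagonal'' shape with entries $l_1$ and $l_2$, and that after these operations the surviving inequalities on $(d_0,d_1,d_2,l_1,l_2)$ are precisely those recorded in Construction~\ref{constr:cstarsurf}. This is a routine but careful manipulation within the $P$-matrix formalism of \cite{HaHaHaSp}; the remaining steps are essentially translations of the hypotheses into the combinatorial language of that paper.
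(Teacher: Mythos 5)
Your overall strategy coincides with the paper's: both proofs reduce to the classification of rational projective $\CC^*$-surfaces by defining matrices in \cite{HaHaHaSp} (the paper cites Theorem~4.18 there for the existence of a defining matrix and Propositions~4.9, 4.15 and~5.1 to pin down its shape). However, two of your justifications are misplaced in a way that leaves a genuine gap at precisely the step you single out as the main obstacle, the normalization.

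First, the $-1$ entries in the top-left $2\times 2$ block are \emph{not} reachable by the admissible symmetries of the formalism. In a general defining matrix with three arms and column distribution $(2,1,1)$, the two columns of the long arm have the form $(-l_{01},-l_{01},d_{01})^t$ and $(-l_{02},-l_{02},d_{02})^t$, and $l_{01},l_{02}$ are the exponents of the corresponding variables in the trinomial relation of the Cox ring; they are invariants of $X$ and cannot be altered by column permutations, by $GL_2(\ZZ)$ on the upper rows, or by shears of the last row. Forcing $l_{01}=l_{02}=1$ is exactly where quasismoothness must be spent: non-toricity gives $l_1,l_2\ge 2$, and an elliptic fixed point at which three invariant curves meet whose exponents are all $\ge 2$ is the image of the singular point of a trinomial surface $V\bigl(T_1^{a}+T_2^{b}+T_3^{c}\bigr)$ with $a,b,c\ge 2$ in the characteristic space, hence is not quasismooth; applying this to the source and to the sink yields $l_{01}=1$ and $l_{02}=1$. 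By contrast, the conditions $\gcd(l_i,d_i)=1$, which is where you invoke quasismoothness, are merely the primitivity of the columns and hold for every generator matrix. Second, your exclusion of parabolic fixed point curves rests on a false principle: such a curve does not in general extend the $\CC^*$-action to a $2$-torus action (non-toric $\CC^*$-surfaces of higher Picard number routinely carry them). For $\rho(X)=1$ a parabolic column leaves at most three arm columns, one per arm, and then the same quasismoothness argument at the remaining elliptic fixed point forces at least one of the three exponents to be $1$, which makes the surface toric. So your conclusion is correct, but the two places where the hypothesis ``quasismooth'' actually does work are precisely the places your argument passes over.
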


\begin{proof}
By \cite[Theorem~4.18]{HaHaHaSp}, we have $X=X(P)$ with a
defining matrix $P$ in the sense of~\cite[Construction~4.2]{HaHaHaSp}.
Using \cite[Propositions~4.9, 4.15 and~5.1]{HaHaHaSp}, we see that
$P$ is as in Construction~\ref{constr:cstarsurf}.
\end{proof}

\begin{constr}[coverings onto fake weighted projective planes]
\label{constr:covs}
Let $X \!=\! X(P)$ in ${Z\!=\!Z(P)}$ arise via
Construction~\ref{constr:cstarsurf} from
a matrix
\[
P
 =
\begin{bmatrix}
-1 & -1 & l_1 & 0
\\
-1 & -1 & 0 & l_2
\\
\hphantom{-}0 & \hphantom{-}d_0 & d_1 & d_2
\end{bmatrix}.
\]
Let $\Sigma$ denote the unique fan in $\ZZ^3$ having $P$
as a generator matrix
and define $\Sigma' \subseteq \Sigma$ to be the subfan
with the maximal cones
\[
\sigma_1 := \cone(v_1,v_3,v_4),
\qquad
\sigma_2 := \cone(v_2,v_3,v_4),
\qquad
\tau := \cone(v_1,v_2).
\]
Then the open toric subvariety $Z' \subseteq Z$ given
by the subfan $\Sigma' \subseteq \Sigma$ satisfies
$X \subseteq Z'$.
Set $\ell := \gcd(l_1,l_2)$ and $\ell_i := l_i/\ell$.
Consider
\[
P_1
 :=
\begin{bmatrix}
-1 & -1 & \ell_2
\\
\hphantom{-}0 & d_0l_1 & \ell_2d_1 + \ell_1d_2
\end{bmatrix},
\qquad
P_2
 :=
\begin{bmatrix}
-1 & -1 & \ell_1
\\
\hphantom{-}0 & d_0l_2 & \ell_2d_1 + \ell_1d_2
\end{bmatrix}.
\]
These are generator matrices for fake weighted
projective planes $Z_1$ and $Z_2$.
In terms of the fake weight vector
$w(P) = (w_1,w_2,w_3,w_4)$ of $Z=Z(P)$, we have
\[
w(P_1) = \bigl(\ell^{-1}w_1, \ell^{-1}w_2, w_3\bigr),
\qquad
w(P_2) = \bigl(\ell^{-1}w_1, \ell^{-1}w_2, w_4\bigr)
\]
for the respective fake weight vectors.
Let $\varphi_i \colon Z' \to Z_i$ be the toric
morphisms defined by the linear maps $
F_i \colon \ZZ^3 \to \ZZ^2$
with the representing matrices
\[
F_1
 :=
\begin{bmatrix}
 0 & 1 & 0
\\
-d_1 & d_1 & l_1
\end{bmatrix},
\qquad
F_2
 :=
\begin{bmatrix}
1 & 0 & 0
\\
d_2 & -d_2 & l_2
\end{bmatrix}.
\]
Restricting to $X \subseteq Z'$ gives a finite
covering $\varphi_1 \colon X \to Z_1$ of degree $l_1$
and a finite covering $\varphi_2 \colon X \to Z_2$
of degree $l_2$.
\end{constr}

\begin{proof}
Everything is basic toric geometry except the statement
on $\varphi_i \colon X \to Z_i$.
On the acting tori $\TT^3 \subseteq Z'$ and $\TT^2 \subseteq Z_i$,
the map $\varphi_2 \colon Z' \to Z_2$ is given by
\[
\varphi_2(s_1,s_2,s_3) = \bigl( s_1, s_2^{-d_2}s_3^{l_2} \bigr).
\]
The points of $X \cap \TT^3$ are of the form
$\xi = (\xi_1, -1-\xi_1, \xi_2)$ with
$\xi_1, \xi_2 \in \CC^*$ such that $\xi_1 \ne -1$.
For the image and the fibers, we obtain
\[
\varphi_2\bigl(X \cap \TT^3\bigr)
=
\bigl\{\eta \in \TT^2; \eta_1 \ne -1\bigr\},
\qquad
\varphi_2^{-1}(\varphi_2(\xi))
=
\bigl\{(\xi_1, -1-\xi_1, \zeta \xi_2); \zeta^{l_2} = 1\bigr\}.
\]
Consequently, $\varphi_2$ is dominant, hence surjective
and its general fiber contains precisely $l_2$ points.
With the coordinate divisors $C_1,C_2,C_3 \subseteq Z_1$,
we have
\[
Z_2 \setminus \varphi_2\bigl(X \cap \TT^3\bigr)
 =
C_1 \cup C_2 \cup C_3 \cup C_4,
\qquad
C_4
 :=
\overline{\bigl\{\eta \in \TT^2; \ \eta_1 = -1\bigr\}}
 \subseteq
Z_2.
\]
Let $D_i \subseteq X$ be the prime divisors
obtained by cutting down the coordinate divisors
of $Z$; see~\mbox{\cite[Proposition~4.9]{HaHaHaSp}}.
Using surjectivity of $\varphi_2$, we see
\[
Z_2 \setminus \varphi_2\bigl(X \cap \TT^3\bigr)
 =
\varphi_2\bigl(X \setminus \TT^3\bigr)
 =
\varphi_2(D_1) \cup \dots \cup \varphi_2(D_4).
\]
Thus, $\varphi_2 \colon X \to Z_2$ must have
finite fibers, proving everything we need.
The map $\varphi_1 \colon X \to Z_1$ can be
treated in an analogous manner.
\end{proof}

\begin{constr}[degenerations to fake weighted projective planes]
\label{constr:degs}
Consider $X=X(P)$ in $Z=Z(P)$ as provided by
Construction~\ref{constr:cstarsurf}
and set
\begin{gather*}
\mathcal{X}_1
 :=
V\bigl(T_1T_2 + ST_3^{l_1} + T_4^{l_2}\bigr)
 \subseteq
Z \times \CC,
\\
\mathcal{X}_2
 :=
V\bigl(T_1T_2 + T_3^{l_1} + ST_4^{l_2}\bigr)
 \subseteq
Z \times \CC,
\end{gather*}
where the $T_i$ are the homogeneous coordinates
on $Z$ and $S$ is the coordinate on $\CC$.
Then $\mathcal{X}_1$ and~$\mathcal{X}_2$
are invariant under the respective $\CC^*$-actions
on $Z \times \CC$ given by
\begin{gather*}
\vartheta \cdot ([z_1,z_2,z_3,z_4],s)
 =
\bigl(\bigl[z_1,z_2,\vartheta^{-1}z_3,z_4\bigr],\vartheta s\bigr),
\\
\vartheta \cdot ([z_1,z_2,z_3,z_4],s)
 =
\bigl(\bigl[z_1,z_2,z_3,\vartheta^{-1}z_4r],\vartheta s\bigr).
\end{gather*}
Restricting the projection $Z \times \CC \to \CC$
yields flat families
$\psi_i \colon \mathcal{X}_i \to \CC$
being compatible with the above $\CC^*$-actions
and the scalar multiplication on $\CC$.
Set
\[
\tilde P_1
 :=
\begin{bmatrix}
d_1 & d_1+l_1d_0 & d_2
\\
l_1 & l_1 & -l_2
\end{bmatrix},
\qquad
\tilde P_2
 :=
\begin{bmatrix}
d_2 & d_2+l_2d_0 & d_1
\\
l_2 & l_2 & -l_1
\end{bmatrix},
\]
and let $\tilde Z_1$, $\tilde Z_2$ denote
the associated fake weighted projective planes.
Then the central fiber $\psi_i^{-1}(0)$
equals $\tilde Z_i$ and any other fiber
$\psi_i^{-1}(s)$ is isomorphic to $X$.
\end{constr}

\begin{proof}
The families $\psi_i \colon \mathcal{X}_i \to \CC$
are those provided by~\cite[Construction~4.1]{HaHaSu}
for $\kappa = 1,2$
and from~\cite[Proposition~4.6]{HaHaSu} we infer that
the $\tilde P_i$ are the generator matrices of
the central fibers.
Thus, $\psi_i^{-1}(0) = \tilde Z_i$.
\end{proof}

\begin{prop}
\label{prop:degprops}
Consider $X=X(P)$ in $Z=Z(P)$ from
Construction~$\ref{constr:cstarsurf}$
and the families $\mathcal{X}_i \to \CC$
from Construction~$\ref{constr:degs}$.
With $w(P) = (w_1,w_2,w_3,w_4)$,
we have
\[
w\bigl(\tilde P_1\bigr)
 =
\bigl(w_1, w_2, -l_1^2d_0\bigr),
\qquad
w\bigl(\tilde P_2\bigr)
 =
\bigl(w_1, w_2, -l_2^2d_0\bigr)
\]
for the fake weight vectors of the central
fibers $\tilde Z_1$ and $\tilde Z_2$.
Moreover, the canonical self intersection
numbers of $X$, $\tilde Z_1$, $\tilde Z_2$ satisfy
\[
\mathcal{K}_X^2
 =
\mathcal{K}_{\tilde Z_1}^2
 =
\mathcal{K}_{\tilde Z_2}^2.
\]
\end{prop}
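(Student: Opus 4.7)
The plan is to handle the two assertions separately, but both by direct computation using the explicit data already available. The fake weight vectors come from $2\times 2$ determinants of the submatrices of $\tilde P_i$ as in Remark \ref{rem:fakewv}, while the canonical self intersection numbers come from Proposition \ref{prop:fwppKsquare} for the toric surfaces $\tilde Z_i$ and from Proposition \ref{prop:cstarsurf} for $X$. The key algebraic identity that will make everything collapse is $w_1+w_2=-l_1l_2d_0$, which is visible directly from the formula for $w(P)$ in Proposition \ref{prop:cstarsurf}.

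First I would compute $w(\tilde P_1)$. Expanding the three $2 \times 2$ minors of $\tilde P_1$ yields $-l_2(d_1+l_1d_0)-l_1d_2$, $-l_2d_1-l_1d_2$ and $l_1^2 d_0$; taking absolute values and using that $d_0<0$ and $l_1l_2d_0+l_2d_1+l_1d_2<0$ (which is the inequality imposed in Construction \ref{constr:cstarsurf}, equivalent to $\cl(x_0)=-d_0>0$ being compatible with the other local orders), these become $w_1$, $w_2$ and $-l_1^2 d_0$ respectively. The computation of $w(\tilde P_2)$ is completely symmetric under interchanging the roles of $(l_1,d_1)$ and $(l_2,d_2)$.

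For the self intersection numbers, applying Proposition \ref{prop:fwppKsquare} to $\tilde Z_1$ gives
\[
\mathcal{K}_{\tilde Z_1}^2 \;=\; \frac{(w_1+w_2-l_1^2 d_0)^2}{w_1 w_2(-l_1^2 d_0)}.
\]
Substituting $w_1+w_2=-l_1l_2d_0$ the numerator factors as $l_1^2d_0^2(l_1+l_2)^2$, so after cancellation one gets $\mathcal{K}_{\tilde Z_1}^2=(-d_0)(l_1+l_2)^2/(w_1w_2)$. The analogous computation for $\tilde Z_2$ gives $(-d_0)(l_1+l_2)^2/(w_1w_2)$ as well. Finally Proposition \ref{prop:cstarsurf} gives the same value for $\mathcal{K}_X^2$ once one recalls $\cl(x_0)=-d_0$, $\cl(x_1)=w_2$, $\cl(x_2)=w_1$.

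There is no real obstacle here: everything is routine determinant expansion plus the one identity $w_1+w_2=-l_1l_2d_0$. The only point that requires care is sign tracking, since $d_0$ is negative and the fake weights are defined as absolute values; using the sign conventions $-d_0=\cl(x_0)>0$ and $l_1l_2d_0+l_2d_1+l_1d_2<0$ from Construction \ref{constr:cstarsurf} at the outset avoids any ambiguity.
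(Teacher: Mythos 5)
Your proof is correct and follows exactly the route the paper takes: the paper's own proof simply says the fake weight vectors follow by direct computation and the equality of self intersection numbers is verified directly from Propositions~\ref{prop:fwppKsquare} and~\ref{prop:cstarsurf}, which is precisely the determinant expansion and the identity $w_1+w_2=-l_1l_2d_0$ you carry out. Your version just makes the computation explicit; no difference in substance.
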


\begin{proof}
The statement on the fake weight vectors is
obtained by direct computation.
Also the identity of the canonical self
intersections can be directly verified,
using Propositions~\ref{prop:fwppKsquare}
and~\ref{prop:cstarsurf}.
\end{proof}

\begin{rem}
The flat families $\psi_i \colon \mathcal{X}_i \to \CC$
from Construction~\ref{constr:degs} are special equivariant
test configurations in the sense of~\cite[Definition~5.2]{HaHaSu}
for the del Pezzo $\CC^*$-surface $X$.
Moreover, according to~\cite[Proposition~5.4]{HaHaSu},
any other special equivariant test configuration of
$X$ has limit $\tilde Z_1$ or $\tilde Z_2$.
\end{rem}


\section[Markov C\^{}*-surfaces]{Markov $\boldsymbol{\CC^*}$-surfaces}

In Construction~\ref{prop:markovchar}, we associate
with each pair of adjacent Markov triples
a rational, projective $\CC^*$-surface.
Theorem~\ref{prop:markovsurface} gathers geometric
properties of these \emph{Markov $\CC^*$-surfaces},
showing in particular that they naturally represent
the edges of the Markov graph.
Theorem~\ref{thm:markovsurfchar}
characterizes the Markov $\CC^*$-surfaces
as the rational, projective $\CC^*$-surfaces
of Picard number one of canonical self intersection
nine.
We begin with a couple of elementary observations
around Markov triples.

\begin{lem}
\label{lem:markovchar}
For any $0 \le k_1 \le k_2 \in \RR$ and $0 \le l_1 \le l_2 \in \RR$,
the following three conditions are equivalent:
\begin{enumerate}\itemsep=0pt
\item[$(i)$]
$l_1l_2 = k_1^2 + k_2^2$ and $l_1+l_2 = 3k_1k_2$,
\item[$(ii)$]
$l_1^2 + k_1^2 + k_2^2 = 3l_1k_1k_2$ and $l_2^2 + k_1^2 + k_2^2 = 3l_2k_1k_2$,
\item[$(iii)$]
$l_1$, $l_2$ are given in terms of $k_1$, $k_2$ as
\[
l_{1,2} = \frac{3k_1k_2 \pm \sqrt{9k_1^2k_2^2 - 4k_1^2 - 4k_2^2}}{2}.
\]
\end{enumerate}
In particular, given any two real numbers $0 \le k_1 \le k_2$,
there exist unique real numbers $0 \le l_1 \le l_2$
satisfying Condition~$(i)$.
\end{lem}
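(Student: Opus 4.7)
The plan is to observe that each of the three conditions is a way of saying that $l_1 \le l_2$ are the two (real) roots, listed in increasing order, of the monic quadratic
\[
q(t) := t^2 - 3k_1k_2\,t + \bigl(k_1^2 + k_2^2\bigr).
\]
Condition~(i) is precisely the pair of Vieta identities for the roots of $q$; condition~(ii) says $q(l_1) = q(l_2) = 0$ directly; and condition~(iii) is the quadratic formula applied to $q$, with the sign choices forced by $l_1 \le l_2$. Once this common framework is in place, the three equivalences become essentially automatic.

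First I would establish (i)$\Leftrightarrow$(ii): given (i), one rewrites $q(t) = t^2 - (l_1+l_2)\,t + l_1 l_2 = (t-l_1)(t-l_2)$, so $l_1, l_2$ are both roots, which is (ii). Conversely, if both $l_i$ are roots of $q$, then either they are distinct, in which case $q$ factors as $(t-l_1)(t-l_2)$ and Vieta's formulas return (i), or they coincide at the double root $l_1 = l_2 = \frac{3}{2}k_1k_2$, in which case the two identities of (i) follow by direct substitution. Next, (ii)$\Leftrightarrow$(iii) is just the quadratic formula for $q$ combined with the ordering $l_1 \le l_2$, which pins down the minus sign for $l_1$ and the plus sign for $l_2$.

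For the closing assertion, formula~(iii) exhibits $l_1$ and $l_2$ as uniquely determined by $k_1, k_2$; reality of the values requires the discriminant $9k_1^2 k_2^2 - 4k_1^2 - 4k_2^2$ to be nonnegative, which is the implicit hypothesis (and which holds in the intended application, where $k_1, k_2$ are Markov numbers, since then $\frac{1}{k_1^2}+\frac{1}{k_2^2} \le 2 < \frac{9}{4}$). I do not anticipate any real obstacle: the lemma reduces to reading off Vieta for a single explicit quadratic, and the only minor subtlety worth flagging is the double-root case in the (ii)$\Rightarrow$(i) direction, which is harmless.
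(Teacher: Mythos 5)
Your framework --- reading all three conditions off the monic quadratic $q(t)=t^2-3k_1k_2\,t+\bigl(k_1^2+k_2^2\bigr)$ --- is exactly the paper's: the paper substitutes $l_2=3k_1k_2-l_1$ into $l_1l_2=k_1^2+k_2^2$ to get (i)$\Rightarrow$(ii), solves the quadratic for (ii)$\Rightarrow$(iii), and multiplies out for (iii)$\Rightarrow$(i). So there is no methodological difference to report. The problem is precisely the step you flag as the ``only minor subtlety'': in the direction (ii)$\Rightarrow$(i) you claim that if $l_1=l_2$ then they ``coincide at the double root $l_1=l_2=\frac{3}{2}k_1k_2$''. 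That is a non sequitur: condition (ii) only says that $l_1$ and $l_2$ are each roots of $q$; if they happen to be equal, they are one root of $q$, which need not be a double root --- $q$ may well have two distinct roots of which $l_1=l_2$ is one. Concretely, take $k_1=k_2=1$, so $q(t)=t^2-3t+2=(t-1)(t-2)$, and $l_1=l_2=2$: condition (ii) holds ($4+1+1=6=3\cdot 2\cdot 1\cdot 1$, twice), but $l_1l_2=4\ne 2=k_1^2+k_2^2$, so (i) fails. Your case split therefore does not close the implication, and no repair is possible because the implication is genuinely false in this degenerate case.

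To be fair, this is a defect of the lemma as stated rather than of your argument alone: the paper's own proof passes from (ii) to (iii) by ``solving the equations of (ii) for $l_1$ and $l_2$'', tacitly assigning the minus sign to $l_1$ and the plus sign to $l_2$, which is exactly where the same counterexample slips through. In the intended applications (where $l_1$, $l_2$ arise as the two distinct mutations of a Markov triple, or where one starts from (i)) the degenerate case never occurs. On the final ``in particular'' clause you are actually more careful than the paper: you correctly observe that real solutions require the discriminant $9k_1^2k_2^2-4k_1^2-4k_2^2$ to be nonnegative, which fails for instance when $k_1=0<k_2$, so the unrestricted existence claim also needs that implicit hypothesis (automatic once $k_1,k_2\ge 1$).
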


\begin{proof}
Suppose that (i) holds. Then the second equation gives
$l_2 = 3k_1k_2 - l_1$. Plugging this into the first
equation, we obtain the first equation of~(ii).
Similarly, considering $l_1 = 3k_1k_2 - l_2$, we arrive
at the second equation of~(ii).
Solving the equations of~(ii) for $l_1$ and $l_2$
gives~(iii).
If~(iii) holds, then we directly compute
$l_1l_2 = k_1^2 + k_2^2$ and $l_1+l_2 = 3k_1k_2$.
\end{proof}

\begin{lem}
\label{prop:markovchar}
Let $k_1 \le k_2$ and $l_1 \le l_2$ be positive integers.
Then the following statements are equivalent:
\begin{enumerate}\itemsep=0pt
\item[$(i)$]
$l_1l_2 = k_1^2 + k_2^2$ and $l_1+l_2 = 3k_1k_2$,
\item[$(ii)$]
$l_1^2 + k_1^2 + k_2^2 = 3l_1k_1k_2$ and $l_2^2 + k_1^2 + k_2^2 = 3l_2k_1k_2$,
\item[$(iii)$]
$(l_1,k_1,k_2)$ and $(k_1,k_2,l_2)$ are Markov triples.
\end{enumerate}
If one of $(i)$ to $(iii)$ holds, then $\gcd(l_1,l_2)=1$
and the triples $(l_1,k_1,k_2)$, $(k_1,k_2,l_2)$
are adjacent and normalized up to switching $(l_1,k_1)$
in the first one.
\end{lem}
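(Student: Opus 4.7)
The plan is to chain three reductions, drawing everything from Lemma~\ref{lem:markovchar} and the standard arithmetic of Markov numbers. The equivalence (i) $\Leftrightarrow$ (ii) is Lemma~\ref{lem:markovchar} specialized to positive integer entries, so no new content is required. The equivalence (ii) $\Leftrightarrow$ (iii) is essentially by definition: a positive integer triple $(a,b,c)$ is a Markov triple precisely when $a^2+b^2+c^2 = 3abc$, so (ii) just asserts that $(l_1,k_1,k_2)$ and $(l_2,k_1,k_2)$---or equivalently its permutation $(k_1,k_2,l_2)$---are Markov triples.

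For the adjacency, mutating $(l_1,k_1,k_2)$ in its first entry produces $(3k_1k_2-l_1,\, k_1,\, k_2)$, which by the second equation of (i) equals $(l_2,k_1,k_2)$, a permutation of $(k_1,k_2,l_2)$. Hence the two normalized triples either coincide or are joined by an edge in the Markov tree. The coprimality $\gcd(l_1,l_2)=1$ then follows from the classical facts---used already in the proof of Proposition~\ref{prop:tmssingularities}---that the entries of a Markov triple are pairwise coprime and never divisible by $3$. Indeed, any common prime divisor $p$ of $l_1,l_2$ divides $l_1+l_2 = 3k_1k_2$; pairwise coprimality inside $(l_1,k_1,k_2)$ rules out $p \mid k_1k_2$, forcing $p = 3$, which contradicts $3 \nmid l_1$.

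It remains to verify the normalization claim. The inequality $l_2 \ge k_2$ is immediate from $l_2^2 \ge l_1l_2 = k_1^2+k_2^2 \ge k_2^2$, so $(k_1,k_2,l_2)$ is already sorted. For $l_1 \le k_2$, I would square the explicit formula for $l_1$ from Lemma~\ref{lem:markovchar}(iii) to reduce the inequality to $k_1^2 \le k_2^2(3k_1-2)$, which is trivial for $k_1 = 1$ and, for $k_1 \ge 2$, follows from $k_2 \ge k_1$ together with $3k_1-2 \ge 4$. Consequently, $(l_1,k_1,k_2)$ is either already sorted (when $l_1 \le k_1$) or becomes sorted by swapping its first two entries (when $k_1 < l_1 \le k_2$). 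I do not anticipate a real obstacle in any of this; the most computational step is the elementary estimate $l_1 \le k_2$, and the remainder is bookkeeping atop Lemma~\ref{lem:markovchar} and standard Markov arithmetic.
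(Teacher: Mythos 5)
Your proposal is correct and follows essentially the same route as the paper: (i)$\Leftrightarrow$(ii) via Lemma~\ref{lem:markovchar}, (ii)$\Leftrightarrow$(iii) by the definition of a Markov triple, coprimality of $l_1,l_2$ from pairwise coprimality within $(l_1,k_1,k_2)$ together with $l_2=3k_1k_2-l_1$ and $3\nmid l_1$, and adjacency read off from $l_2=3k_1k_2-l_1$. Your verification of the normalization (reducing $l_1\le k_2$ to $k_1^2\le k_2^2(3k_1-2)$ by squaring the root formula) is just a slightly more explicit version of the elementary inequality the paper leaves to the reader, so no substantive difference.
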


\begin{proof}
The equivalence of (i) and (ii) holds by Lemma~\ref{lem:markovchar}
and the equivalence of~(ii) and (iii) is valid by the definition of
a Markov triple.
Now assume that one of the three conditions holds.
Then $k_1$, $k_2$, $l_2$ form a Markov triple, hence are pairwise
coprime and, using $l_2 = 3k_1k_2 - l_1$, we see that
$l_1$ and $l_2$ must be coprime as well.
Moreover, $k_1 \le k_2$ and $l_1 \le l_2$ together with
$l_1+l_2=3k_1k_2$ imply that the triples are normalized
up to switching $(l_1,k_1)$ in the first one.
Finally, $l_2= 3k_1k_2 - l_1$ merely means that the
triples are adjacent.
\end{proof}

\begin{lem}
\label{prop:markovd1d2}
Let $(l_1,k_1,k_2)$ and $(k_1,k_2,l_2)$ be
Markov triples.
Then there exist integers $d_1$, $d_2$ such that
\[
k_2^2 = l_2d_1 + l_1d_2,
\qquad
1 \le d_1 \le l_1.
\]
The integers $d_1$ and $d_2$ are uniquely determined by these
properties. Moreover, they satisfy $\gcd(l_i,d_i)=1$.
\end{lem}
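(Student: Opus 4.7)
The plan is to leverage the coprimality $\gcd(l_1,l_2)=1$, which is already recorded in Lemma~\ref{prop:markovchar}. First I would observe that the linear congruence $l_2 d_1 \equiv k_2^2 \pmod{l_1}$ has a unique solution modulo $l_1$, because $l_2$ is invertible mod $l_1$. I then pick $d_1$ to be the unique representative of this residue class in $\{1,2,\ldots,l_1\}$, which is a complete system of residues modulo $l_1$; setting $d_2 := (k_2^2 - l_2 d_1)/l_1$ yields an integer by construction and produces the desired pair $(d_1,d_2)$. Uniqueness is immediate: the equation $l_2 d_1 + l_1 d_2 = k_2^2$ together with $1 \le d_1 \le l_1$ pins down $d_1$ modulo $l_1$ inside a complete residue system, and $d_2$ is then forced by the equation.

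For the coprimality assertions, I would argue by contradiction. Suppose a prime $p$ divides $\gcd(l_1,d_1)$. Reducing $k_2^2 = l_2 d_1 + l_1 d_2$ modulo $p$ gives $p \mid k_2^2$, hence $p \mid k_2$; but the entries of the Markov triple $(l_1,k_1,k_2)$ are pairwise coprime (a standard fact about Markov triples, preserved inductively under mutations and already used implicitly in the proof of Proposition~\ref{prop:tmssingularities}), so $\gcd(l_1,k_2)=1$, contradicting $p\mid l_1$ and $p\mid k_2$. The same argument with $(k_1,k_2,l_2)$ in place of $(l_1,k_1,k_2)$ rules out any common prime factor of $l_2$ and $d_2$, yielding $\gcd(l_2,d_2)=1$.

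The main obstacle is essentially nonexistent: the whole statement reduces to B\'ezout plus pairwise coprimality. The only detail requiring a moment's care is that the normalization range is $\{1,\ldots,l_1\}$ rather than the more familiar $\{0,\ldots,l_1-1\}$; this is legitimate because it is still a complete residue system mod $l_1$, and it accommodates the degenerate case $l_1=1$, where it forces $d_1=1$ and hence $d_2 = k_2^2 - l_2$.
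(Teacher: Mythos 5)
Your proposal is correct and follows essentially the same route as the paper: invert $l_2$ modulo $l_1$ (using $\gcd(l_1,l_2)=1$ from Lemma~\ref{prop:markovchar}), take the unique representative $d_1$ in $\{1,\dots,l_1\}$, let the equation force $d_2$, and deduce $\gcd(l_i,d_i)=1$ from the pairwise coprimality of Markov triple entries. The only cosmetic difference is that the paper phrases the coprimality step as ``$\bar d_i$ is a product of units, hence a unit'' in $\ZZ/l_i\ZZ$, whereas you argue by contradiction with a common prime divisor; both rest on exactly the same facts.
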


\begin{proof}
Consider the factor ring $\ZZ / l_1\ZZ$.
Lemma~\ref{prop:markovchar} says that
$l_2$ and $l_1$ are coprime.
Consequently, there is a~multiplicative inverse
$\bar c_1 \in \ZZ / l_1\ZZ$ of $\bar l_2 \in \ZZ / l_1\ZZ$.
We claim that there is a~unique $d_1 \in \ZZ$ with
\[
1 \le d_1 \le l_1,
\qquad
\bar d_1 = \bar c_1 \cdot \bar k_2^2 \in \ZZ / l_1\ZZ.
\]
Indeed, since $(l_1,k_1,k_2)$ is a Markov triple,
the numbers $l_1$ and $k_2$ are coprime.
Thus, being a~product of units,
$\bar d_1 := \bar c_1 \cdot \bar k_2^2$
is a unit in $\ZZ / l_1\ZZ$.
We take the unique representative $1 \le d_1 \le l_1$.
Now, there is a unique $d_2 \in \ZZ$ with
\[
l_1d_2 + l_2d_1 = k_2^2,
\]
because of
$\bar l_2 \cdot \bar d_1 = \bar l_2 \cdot \bar c_1 \cdot \bar k_2^2 = \bar k_2^2$
in $\ZZ / l_1\ZZ$.
To obtain $\gcd(l_2,d_2)=1$, look at the factor ring~$\ZZ/l_2\ZZ$.
There $\bar l_1$ admits a multiplicative inverse $\bar c_2$.
This gives us $\bar d_2 = \bar c_2 \cdot \bar k_2^2$ in $\ZZ/l_2\ZZ$.
Hence~$\bar d_2$ is a unit in $\ZZ/l_2\ZZ$.
\end{proof}

\begin{constr}
\label{constr:markovsurface}
Let $\mu = ((l_1,k_1,k_2), (k_1,k_2,l_2))$
be a pair of adjacent Markov triples,
the second triple normalized
and the first one up to switching $(l_1,k_1)$.
Let $d_1, d_2 \in \ZZ$ be as provided
by Lemma~\ref{prop:markovd1d2} and set
\[
P(\mu)
 :=
\begin{bmatrix}
-1 & -1 & l_1 & 0
\\
-1 & -1 & 0 & l_2
\\
\hphantom{-} 0 & -1 & d_1 & d_2
\end{bmatrix}.
\]
As in Construction~\ref{constr:cstarsurf},
let $Z(\mu)$ be the toric threefold defined
by the complete fan in $\ZZ^3$ having~$P(\mu)$
as its generator matrix, let $S_1$, $S_2$, $S_3$ be
the coordinates of the acting torus
$\TT^3 \subseteq Z(\mu)$ and set
\[
X (\mu) := \overline{V(1+S_1+S_2)} \ \subseteq \ Z(\mu).
\]
\end{constr}

\begin{thm}
\label{prop:markovsurface}
Let $X=X(\mu)$ be as in Construction~$\ref{constr:markovsurface}$.
Then $X$ is a quasismooth, rational $\CC^*$-surface with
$\Cl(X)=\ZZ$ and Cox ring $\mathcal{R}(X)$ given~by
\begin{gather*}
\mathcal{R}(X)
 =
\CC[T_1,T_2,T_3,T_4]/\bigl\langle T_1T_2+T_3^{l_1}+T_4^{l_2}\bigr\rangle,
\\
\deg(T_1) = k_1^2,\qquad
\deg(T_2) = k_2^2,
\qquad
\deg(T_3) = l_2,
\qquad
\deg(T_4) = l_1.
\end{gather*}
The ambient toric threefold $Z=Z(\mu)$
is the weighted projective space
\smash{$\PP_{(k_1^2, k_2^2, l_2, l_1)}$}
and $X$ is the zero set of a homogeneous
equation of degree $k_1^2+k_2^2 = l_1l_2$:
\[
X
 =
V\bigl(T_1T_2+T_3^{l_1}+T_4^{l_2}\bigr)
 \subseteq
\PP_{(k_1^2, k_2^2, l_2, l_1)}
 =
Z.
\]
The surface $X$ is of Picard number one,
it is non-toric if and only if $l_1 > 1$
and the $\CC^*$-action on $X$ is given in homogeneous
coordinates by
\[
t \cdot [z] = \bigl[t \cdot z_1, t^{-1} \cdot z_2, z_3, z_4\bigr].
\]
The only possible singularities of $X$ are the
elliptic fixed points, given together with their
local class group order, local Gorenstein index
and singularity type by
\begin{gather*}
x_1 = [0,1,0,0], \qquad \cl(x_1) = k_2^2, \qquad \iota(x_1) = k_2,
\qquad \frac{1}{k_2^2}(1,p_2k_2-1),
\\
x_2 = [1,0,0,0], \qquad \cl(x_2) = k_1^2, \qquad \iota(x_2) = k_1,
\qquad \frac{1}{k_1^2}(1,p_1k_1-1),
\end{gather*}
with $p_i \in \ZZ_{\ge 1}$ such that $\gcd(p_i,k_i)=1$.
For the canonical self intersection of $X$, we have
$\mathcal{K}_X^2 = 9$.
Moreover, there is a commutative diagram
\[
\xymatrix{
&&
{\PP_{(k_1^2, k_2^2, l_2, l_1)}}
\ar@{-->}[dll]_{[z_1,z_2,z_4^{l_1}] \mapsfrom [z_1,z_2,z_3,z_4]\qquad\qquad}
\ar@{-->}[drr]^{\qquad\qquad[z_1,z_2,z_3,z_4] \mapsto [z_1,z_2,z_3^{l_2}]}
&&
\\
{\PP_{(k_1^2,k_2^2,l_1^2)}}
&&
X
\ar[u]
\ar[ll]^{\qquad 1:l_1^2}
\ar[rr]_{l_2^2:1 \qquad}
&&
{\PP_{(k_1^2,k_2^2,l_2^2)}}
}
\]
with finite coverings
\smash{$X(k_1,k_2,l_1,l_2) \to {\PP_{(k_1^2,k_2^2,l_i^2)}}$}
of degree $l_i^2$, respectively.
Finally, we obtain flat families
$\psi_i \colon \mathcal{X}_i \to \CC$,
where
\begin{gather*}
\mathcal{X}_1
 :=
V\bigl(T_1T_2 + ST_3^{l_1} + T_4^{l_2}\bigr)
 \subseteq
\PP_{(k_1^2, k_2^2, l_2, l_1)} \times \CC,
\\
\mathcal{X}_2
 :=
V\bigl(T_1T_2 + T_3^{l_1} + ST_4^{l_2}\bigr)
 \subseteq
\PP_{(k_1^2, k_2^2, l_2, l_1)} \times \CC
\end{gather*}
and the $\psi_i$ are given by restricting the projection
\smash{$\PP_{(k_1^2, k_2^2, l_2, l_1)} \times \CC \to \CC$}.
For the fibers of these families, we have
\[
\psi_i^{-1}(s) \cong X(k_1,k_2,l_1,l_2), \quad s \in \CC^*,
\qquad
\psi_i^{-1}(0) \cong {\PP_{(k_1^2,k_2^2,l_i^2)}}.
\]
\end{thm}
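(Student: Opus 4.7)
The plan is to apply the machinery of Constructions~\ref{constr:cstarsurf} and~\ref{constr:degs} together with Propositions~\ref{prop:cstarsurf} and~\ref{prop:degprops} to the specific matrix $P(\mu)$, and to supply direct arguments for the remaining assertions. First I verify that $P(\mu)$ fits the hypotheses of Construction~\ref{constr:cstarsurf}: the coprimality conditions $\gcd(l_i,d_i)=1$ are part of Lemma~\ref{prop:markovd1d2}, and the inequalities reduce, with $d_0=-1$, to $0 < k_2^2/(l_1l_2) < 1$, which is clear from $l_1l_2 = k_1^2+k_2^2$. Inserting the entries of $P(\mu)$ into the fake weight vector formula from Proposition~\ref{prop:cstarsurf} then gives
\[
w(P(\mu)) = \bigl(l_1l_2 - k_2^2,\; l_2d_1 + l_1d_2,\; l_2,\; l_1\bigr) = \bigl(k_1^2, k_2^2, l_2, l_1\bigr),
\]
and pairwise coprimality of the Markov numbers (Lemma~\ref{prop:markovchar}) makes this vector primitive, so the ambient toric threefold is the honest weighted projective space $\PP_{(k_1^2,k_2^2,l_2,l_1)}$. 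From here, Proposition~\ref{prop:cstarsurf} delivers quasismoothness, rationality, the $\CC^*$-action, $\rho(X)=1$, the del Pezzo property, the fixed-point description, and the local class group orders $\cl(x_0)=1$, $\cl(x_1)=k_2^2$, $\cl(x_2)=k_1^2$. The defining equation $T_1T_2+T_3^{l_1}+T_4^{l_2}$ is homogeneous of degree $k_1^2+k_2^2=l_1l_2$, and the stated Cox ring is the standard presentation of a quasismooth hypersurface of Picard number one in a weighted projective space.

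For the canonical self-intersection I plug into the final formula of Proposition~\ref{prop:cstarsurf}:
\[
\mathcal{K}_X^2 = \frac{\cl(x_0)}{\cl(x_1)\cl(x_2)}(l_1+l_2)^2 = \frac{(3k_1k_2)^2}{k_1^2 k_2^2} = 9,
\]
using $l_1+l_2 = 3k_1k_2$ from Lemma~\ref{prop:markovchar}. Since $\cl(x_0)=1$ the hyperbolic point $x_0$ is smooth, so only $x_1,x_2$ can be singular. Their singularity types are identified via Lemma~\ref{label:tsing}, which reduces the claim to $\cl(x_i)=\iota(x_i)^2$. By adjunction the canonical class of $X$ has $\Cl(X)=\ZZ$-degree $-(l_1+l_2)=-3k_1k_2$, and its image in $\Cl(X,x_1)=\ZZ/k_2^2\ZZ$ has order $k_2^2/\gcd(3k_1k_2,k_2^2)=k_2/\gcd(3k_1,k_2)$. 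Pairwise coprimality of Markov numbers together with the standard fact that no Markov number is divisible by $3$ (from a direct mod-$3$ analysis of $x^2+y^2+z^2=3xyz$) gives $\gcd(3k_1,k_2)=1$, hence $\iota(x_1)=k_2$, the equality $\cl(x_1)=\iota(x_1)^2$ holds, and Lemma~\ref{label:tsing} yields the type $\frac{1}{k_2^2}(1,p_2k_2-1)$; the case of $x_2$ is symmetric.

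The non-toric criterion is handled by a Cox ring argument: a projective variety of Picard number one with polynomial Cox ring is a (fake) weighted projective space, hence toric. If $l_1=1$ the relation $T_1T_2+T_3+T_4^{l_2}$ is linear in $T_3$ and elimination yields $X \cong \PP_{(k_1^2,k_2^2,1)}$; if $l_1>1$ the relation is genuinely non-trivial. The covering $\varphi_i$ onto the toric Markov surface is defined as the restriction to $X$ of the rational map $[z_1,z_2,z_3,z_4]\mapsto[z_1,z_2,z_4^{l_1}]$ for $i=1$ and $[z_1,z_2,z_3,z_4]\mapsto[z_1,z_2,z_3^{l_2}]$ for $i=2$. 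Well-definedness on $X$ follows since $V(z_1,z_2,z_4)\cap X$ and $V(z_1,z_2,z_3)\cap X$ are forced to be empty by the hypersurface equation. A direct preimage count, after normalizing a representative in the source quotient, yields $l_i^2$ preimages of a generic target point ($l_i$ choices for the lifted coordinate, then $l_i$ choices from the hypersurface equation), and properness upgrades this to a finite cover of degree $l_i^2$. The flat families $\mathcal{X}_i \to \CC$ are those of Construction~\ref{constr:degs}, and Proposition~\ref{prop:degprops} supplies $w(\tilde P_i) = (k_1^2,k_2^2,l_i^2)$ for the central fibers; primitivity again yields $\tilde Z_i \cong \PP_{(k_1^2,k_2^2,l_i^2)}$, completing the identification.

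The main obstacle, in my view, is the singularity-type identification: everything else follows mechanically from the general results already in place, whereas establishing $\iota(x_i)=k_i$ requires combining the pairwise coprimality of Markov numbers with the extra arithmetic fact $3\nmid k_i$, without which the gcd in the order computation would fail to collapse.
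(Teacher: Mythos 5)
Your proposal is correct and follows the same overall skeleton as the paper's proof: verify that $P(\mu)$ satisfies the hypotheses of Construction~\ref{constr:cstarsurf} with $d_0=-1$ (your inequality check via $l_1l_2=k_1^2+k_2^2$ is exactly the paper's), then harvest quasismoothness, rationality, $\rho(X)=1$, the fixed points, the local class group orders and $\mathcal{K}_X^2=9$ from Proposition~\ref{prop:cstarsurf} and Lemma~\ref{prop:markovchar}, and delegate the degenerations to Constructions~\ref{constr:degs} and Proposition~\ref{prop:degprops}. You deviate genuinely at two sub-steps, both in a defensible way. For the local Gorenstein indices, the paper exhibits explicit linear forms $u_1,u_2$ evaluating to $0$ and $1$ on the relevant columns of $P$ and cites a combinatorial criterion, whereas you compute the degree $-(l_1+l_2)=-3k_1k_2$ of $\mathcal{K}_X$ by adjunction and take its order under the surjection $\ZZ=\Cl(X)\to\Cl(X,x_i)\cong\ZZ/k_i^2\ZZ$, using $\gcd(3k_i,k_j)=1$; this is shorter and mirrors the paper's own argument for the toric case in Proposition~\ref{prop:tmssingularities}, at the cost of invoking adjunction on a quasismooth hypersurface rather than staying purely combinatorial. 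For the coverings, the paper factors each map as the degree-$l_i$ covering of Construction~\ref{constr:covs} followed by the degree-$l_i$ quotient $\PP_{(k_1^2,k_2^2,l_i)}\to\PP_{(k_1^2,k_2^2,l_i^2)}$, while you count the $l_i\cdot l_i$ points of a generic fiber directly; your count is right (the $l_i^2$ lifts lie on distinct orbits because $\gcd(k_1^2,k_2^2)=1$), but the composition argument is the one that reuses work already done. The one place you are too quick is the claim that $\Cl(X)=\ZZ$ with the stated Cox ring is ``the standard presentation'': the paper actually verifies the exact sequence $0\to\ZZ^3\to\ZZ^4\to\ZZ\to 0$, where torsion-freeness of the cokernel hinges on $\gcd(l_1,l_2)=1$ and surjectivity of $Q$ on $\gcd(k_1^2,k_2^2)=1$, before citing the general Cox ring results; without that coprimality check the class group could acquire torsion and the presentation would be wrong, so this step should be made explicit rather than asserted.
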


\begin{proof}[Proof of Construction~\ref{constr:markovsurface}
and Theorem~\ref{prop:markovsurface}]
First, we check that $P(\mu)$ fits into
Construction~\ref{constr:cstarsurf} with $d_0=-1$.
By the normalizedness assumption, we have $l_1 \le l_2$.
Lemma~\ref{prop:markovd1d2} delivers $1 \le d_1 \le l_1$
and $\gcd(l_i,d_i)=1$. Moreover,
\[
-1 + \frac{d_1}{l_1} + \frac{d_2}{l_2}
 =
\frac{-l_1l_2 + k_2^2}{l_1l_2}
 =
-\frac{k_1^2}{l_1l_2}
 <
0
 <
\frac{k_2^2}{l_1l_2}
 =
\frac{d_1}{l_1} + \frac{d_2}{l_2}.
\]
Thus, $P = P(\mu)$ is as wanted and
Proposition~\ref{prop:cstarsurf} says that
$X=X(P)$ is a quasismooth, rational,
projective $\CC^*$-surface.
For the divisor class group and the Cox ring of $X$,
we need to have an exact sequence
\[
\xymatrix{
0
\ar[r]
&
\ZZ^3
\ar[rr]^{P^*}
&&
\ZZ^4
\ar[rr]^Q
&&
\ZZ
\ar[r]
&
0.
}
\]
The transpose matrix $P^*$ is injective
and, as $l_1$ and $l_2$ are coprime,
the columns of~$P^*$ generate a~primitive
sublattice of $\ZZ^4$ and thus we have
a torsion free cokernel.
Moreover, we directly check that
$Q \cdot P^* = 0$ holds with
\[
Q = \begin{bmatrix} k_1^2 & k_2^2 & l_2 & l_1 \end{bmatrix}.
\]
Since $k_1^2$ and $k_2^2$ are coprime, $Q$ maps onto $\ZZ$.
Consequently, \cite[Propositions~4.13 and~4.16]{HaHaHaSp} yield
$\Cl(X) = \ZZ$ and the desired presentation of $\mathcal{R}(X)$.
Moreover, we can identify the ambient toric variety as
\[
Z(\mu) = \PP_{(k_1^2, k_2^2, l_2, l_1)}.
\]
The hyperbolic fixed point $x_0$ is smooth by
\cite[Proposition~5.1]{HaHaHaSp}.
Proposition~\ref{prop:cstarsurf} and
Lemmas~\ref{prop:markovchar} and~\ref{prop:markovd1d2}
give us $\cl(x_1)$ and $\cl(x_2)$.
For the local Gorenstein indices, consider
the following linear forms
\[
u_1 = \biggl[\frac{d_2-d_1}{k_2^2}, \frac{d_1-d_2}{k_2^2}, \frac{3k_1}{k_2} \biggr],
\qquad
u_2 = \biggl[\frac{d_1-d_2+l_2}{k_1^2}, \frac{d_2-d_1+l_1}{k_1^2}, - \frac{3k_2}{k_1} \biggr].
\]
Using the identities $k_1^2 = l_1l_2-l_2d_1-l_1d_2$ and $k_2^2 = l_2d_1+l_1d_2$
just established, one directly checks that the linear forms $u_1$, $u_2$ evaluate
on the columns $v_1$, $v_2$, $v_3$, $v_4$ of~$P$ as follows:
\begin{gather*}
\bangle{u_1,v_1} = 0, \qquad
\bangle{u_1,v_3}
= 1, \qquad
\bangle{u_1,v_4}
= 1,
\\
\bangle{u_2,v_2} = 0, \qquad
\bangle{u_2,v_3}
= 1, \qquad
\bangle{u_2,v_4}
= 1.
\end{gather*}
We can conclude that $k_2u_1$ and $k_1u_2$ are primitive
integral vectors and~\cite[Proposition~8.9]{HaHaSp} yields
$\iota(x_1)=k_2$ and $\iota(x_2)=k_1$.
As~$X$ is quasismooth, $x_1$, $x_2$ are toric
singularities and Lemma~\ref{label:tsing}
gives us their singularity type.
We obtain
\[
\mathcal{K}_X^2
 =
\frac{\cl(x_0)}{\cl(x_1)\cl(x_2)} (l_1+l_2)^2
 =
\biggl(\frac{l_1+l_2}{k_1k_2}\biggr)^2
 =
9
\]
for the canonical self intersection number,
using Proposition~\ref{prop:cstarsurf} and
Lemma~\ref{prop:markovchar} for the last equation.
The desired coverings from $X$ onto the
weighted projective planes are those from
Construction~\ref{constr:covs} followed by the
obvious ones:
\[
\PP_{(k_1^2,k_2^2,l_i)} \to \PP_{(k_1^2,k_2^2,l_i^2)},
\qquad
[z_1,z_2,z_3] \mapsto \bigl[z_1,z_2,z_3^{l_i}\bigr].
\]
Finally, the families $\mathcal{X}_i \to \CC$
are provided by Construction~\ref{constr:degs}
and their properties claimed in the assertion
are guaranteed by
Proposition~\ref{prop:degprops}.
\end{proof}


\section{Proof of the main results}

Here we prove the main results of this note,
Theorems~\ref{thm:markovsurfchar} and \ref{cor:planedegs}.
A first observation is that log terminal $\CC^*$-surfaces
of Picard number one and canonical self intersection nine
are quasismooth.

\begin{prop}
\label{prop:notqs}
Let $X$ be a log terminal, rational, projective
$\CC^*$-surface of Picard number one with
$\mathcal{K}_X^2 = 9$. Then $X$ is quasismooth.
\end{prop}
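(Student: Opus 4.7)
The plan is to use the general classification of rational projective $\CC^*$-surfaces of Picard number one from \cite[Theorem~4.18]{HaHaHaSp}: every such $X$ is isomorphic to some $X(P)$ for a defining matrix $P$, and quasismoothness corresponds precisely to the coprimality conditions $\gcd(l_i,d_i)=1$ on the columns of $P$ that single out Construction~\ref{constr:cstarsurf} from the wider class of defining matrices (cf.\ \cite[Propositions~4.9, 4.15 and~5.1]{HaHaHaSp}). Since toric surfaces of Picard number one are fake weighted projective planes with smooth characteristic space $\CC^3\setminus\{0\}$, quasismoothness is automatic in the toric case, and it suffices to handle the non-toric setting.

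I would then assume, for contradiction, that $X$ is non-toric, log terminal, of Picard number one with $\mathcal{K}_X^2=9$, but not quasismooth. Then some column of the defining $P$ has $c_i:=\gcd(l_i,d_i)>1$. Since the $\CC^*$-action is free away from finitely many fixed points and orbits with nontrivial isotropy, all singularities of $X$ lie in this special locus, and by \cite[Corollary~6.12]{HaHu} (cited in the excerpt) a singular point of a rational $\CC^*$-surface is quasismooth if and only if it is a cyclic quotient singularity. Log terminality, which for a surface merely means having only quotient singularities, then forces the non-quasismooth points to be \emph{non-cyclic} quotient singularities, which in turn heavily constrains the problematic column of $P$.

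The final step is to compute $\mathcal{K}_X^2$ for a general $X(P)$ via \cite[Proposition~7.9]{HaHaSp}: the resulting formula generalizes the one of Proposition~\ref{prop:cstarsurf} by carrying correction factors involving the gcds $c_i$. Imposing $\mathcal{K}_X^2=9$ together with the slope inequalities of Construction~\ref{constr:cstarsurf} and the non-primitivity $c_i>1$ yields a Diophantine system that I would resolve by passing to the natural cyclic cover $\tilde X=X(\tilde P)\to X$ obtained by primitivizing the columns of $P$. Since $\tilde X$ is then quasismooth, the covering-degree formula for the canonical square combined with Markov-type analysis in the spirit of Lemma~\ref{lem:diophantic} should exclude the value $9$ on the $X$ side. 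The main obstacle I expect is exactly this last Diophantine step: one must control the ramification of $\tilde X\to X$ sharply enough that the induced equation on $\mathcal{K}_{\tilde X}^2$ admits no log-terminal solution with any $c_i>1$, and this comparison must be done in a way that keeps track of the local class groups at the elliptic fixed points so that the numerical identity in Proposition~\ref{prop:cstarsurf} is genuinely available.
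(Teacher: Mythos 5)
Your high-level plan---reduce to defining matrices, use log terminality to pin the non-quasismooth singularities down as non-cyclic quotient singularities, then rule out $\mathcal{K}_X^2=9$ by a Diophantine argument---matches the paper's strategy in outline, but the concrete mechanism you propose rests on a misreading of where non-quasismoothness lives in the matrix $P$. The columns of any defining matrix are primitive by construction (this is already required in Construction~\ref{constr:fwps} for $P$ to be a generator matrix of a fan), so the coprimality $\gcd(l_i,d_i)=1$ never fails; there is no column with $c_i=\gcd(l_i,d_i)>1$, hence no ``cyclic cover $\tilde X\to X$ obtained by primitivizing the columns'', and the ``correction factors involving the $c_i$'' in the canonical self intersection formula do not exist. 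What actually distinguishes the quasismooth case is the \emph{arm structure}: in Construction~\ref{constr:cstarsurf} the two leftmost columns have upper entries $-1$, which forces each elliptic fixed point to see a triple of $l$-values containing a $1$, i.e., a cyclic quotient singularity. A log terminal but non-quasismooth $X$ instead has an elliptic fixed point whose triple of $l$-values is a platonic triple of type $D$ or $E$, namely $(2,2,n)$, $(2,3,3)$, $(2,3,4)$ or $(2,3,5)$ by Brieskorn's result, and this is reflected by the relevant upper entries of $P$ all having absolute value at least $2$.

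Because of this, your reduction does not get off the ground, and the remaining work is of a different nature: one enumerates the finitely many admissible tuples of upper entries permitted by log terminality (via \cite[Proposition~5.9]{HaHaHaSp}, e.g., $(1,y,2,2)$, $(1,2,y,2)$, \dots) and, for each resulting family of matrices, computes $\mathcal{K}_X^2$ with \cite[Proposition~7.9]{HaHaSp}. In some families one gets a direct estimate such as $\mathcal{K}_X^2<4$; in others, e.g., the one with upper entries $(1,2,y,2)$, the condition $\mathcal{K}_X^2=9$ becomes a quadratic equation in $d_0$ whose discriminant equals $a(9a-4)$ with $a=2d_1+l_1>0$, which lies strictly between $(3a-1)^2$ and $(3a)^2$ and so is never a perfect square, excluding integral solutions. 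Neither step follows from Lemma~\ref{lem:diophantic} or from a covering-degree comparison; you would need to supply this finite case analysis to close the gap.
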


\begin{proof}
We use the description of rational, projective
$\CC^*$-surfaces of Picard number one via
defining matrices $P$ provided
by~\cite[Construction~4.2, Proposition~4.5]{HaHaHaSp}.
Log terminality of $X$ imposes strong
conditions on the upper rows of $P$,
see~\cite[Proposition~5.9]{HaHaHaSp}.

The non-quasismooth log terminal surface singularities
are those of the types $D$ or $E$, where ``type $D$''
refers to the platonic triples $(2,2,n)$ and ``type $E$''
gathers the platonic triples $(2,3,3)$, $(2,3,4)$
and $(2,3,5)$ in Brieskorn's result~\cite[Satz~2.10]{Br};
see also~\cite[Proposition~8.14]{HaHaSp}.
The first tuple of possible upper entries
from~\cite[Proposition~5.9]{HaHaHaSp} is
$(1,y,2,2)$ and the associated defining
matrix is of the form
\begin{gather*}
P
 =
\begin{bmatrix}
-1 & -l_0 & 2 & 0
\\
-1 & -l_0 & 0 & 2
\\
\hphantom{-} 0 & \hphantom{-}d_0 & d_1 & 1
\end{bmatrix},
\qquad
\gcd(l_0,d_0) = \gcd(2,d_1) = 1,
\\
d_0 < 0, \qquad
l_0 \ge 2, \qquad
\frac{d_1}{2}+\frac{1}{2} > 0, \qquad
\frac{d_0}{l_0}+\frac{d_1}{2}+\frac{1}{2} < 0.
\end{gather*}
The resulting $\CC^*$-surface $X$, built as in
Construction~\ref{constr:cstarsurf}, has
$[1,0,0,0] \in X$ as singular point
of type $D$.
From~\cite[Proposition~7.9]{HaHaSp}, we infer
\begin{align*}
\mathcal{K}_X^2
&{}=
\frac{1}{\frac{d_1}{2}+\frac{1}{2}}
 -
\frac{2-l_0-\frac{1}{l_0}}{d_0}
 -
\frac{1}{l_0^2\bigl(\frac{d_0}{l_0}+\frac{d_1}{2}+\frac{1}{2}\bigr)}
\\[10pt]
&{}=
\frac{2}{d_1+1}
 +
\frac{(l_0-1)^2}{d_0l_0}
 -
\frac{2}{l_0^2(d_1l_0 + 2d_0 + l_0)}.
\end{align*}
Since the second right-hand side term is negative,
we have $\mathcal{K}_X^2 < 4$ in this case.
The next tuple of upper entries is $(1,2,y,2)$,
which leads to the defining matrix
\begin{gather*}
P
 =
\begin{bmatrix}
-1 & -2 & l_1 & 0
\\
-1 & -2 & 0 & 2
\\
\hphantom{-} 0 & \hphantom{-}d_0 & d_1 & 1
\end{bmatrix},
\qquad
\gcd(2,d_0) = \gcd(l_1,d_1) = 1,
\\
d_0 < 0, \qquad l_1 \ge 2, \qquad \frac{d_1}{l_1}+\frac{1}{2} > 0, \qquad \frac{d_0}{2}+\frac{d_1}{l_1}+\frac{1}{2} < 0.
\end{gather*}
As before, the resulting $\CC^*$-surface $X$
has $[1,0,0,0] \in X$ as a singular point
of type~$D$.
This time, the canonical self intersection
is the following:
\[
\mathcal{K}_X^2
 =
\frac{\bigl(\frac{1}{l_1}+\frac{1}{2}\bigr)^2}{\frac{d_1}{l_1}+\frac{1}{2}}
 +
\frac{1}{2d_0}
 -
\frac{1}{l_1^2\bigl(d_0+\frac{d_1}{l_1}+\frac{1}{2}\bigr)}.
\]
Note that the first term is not bounded from above,
so we can't estimate $\mathcal{K}_X^2$.
Instead we observe that $\mathcal{K}_X^2=9$ is a
quadratic equation in $d_0$ with discriminant
\[
\Delta
 =
36d_1^2 + 36d_1l_1 + 9l_1^2 - 8d_1 - 4l_1.
\]
The key observations are that $\Delta$ factors as
$\Delta = a(9a-4)$ for $a=2d_1+l_1>0$ and
that $a(9a-4)$ never is a square.
Thus we can conclude $\mathcal{K}_X^2 \ne 9$.

These two sample cases show all the arguments
for the remaining ones: either we directly
estimate $\mathcal{K}_X^2 < 9$ or we can
show that $\mathcal{K}_X^2 = 9$ admits no
integral solution.
Concerning the latter, $(1,2,y,2)$ is in fact
the most tricky case.
\end{proof}

\begin{rem}
Log terminality is essential in Proposition~\ref{prop:notqs}.
For instance, consider the rational, projective
$\CC^*$-surfaces $X$ built from the matrices
\[
P
 =
\begin{bmatrix}
-1 & -l^2 + 4l - 1 & l^2 - l + 1 & 0
\\
-1 & -l^2 + 4l - 1 & 0 & 2
\\
\hphantom{-} 0 & -1 & \frac{l-l^2}{2} & 1
\end{bmatrix},
\qquad l \ge 5,
\]
exactly as in Construction~\ref{constr:cstarsurf}.
Then $\rho(X)=1$ and $\mathcal{K}_X^2=9$ by~\cite[Proposition~6.9]{HaHaSp}.
From \mbox{\cite[Corollary~8.12]{HaHaSp}}, we infer that $X$ is
not log terminal, thus not quasismooth.
\end{rem}

\begin{defi}
By a \emph{Markov $\CC^*$-surface} we mean
a $\CC^*$-surface isomorphic to some $X(\mu)$
arising from Construction~\ref{constr:markovsurface}.
\end{defi}

\begin{thm}
\label{thm:markovsurfchar}
Let $X$ be a non-toric, log terminal,
rational $\CC^*$-surface of Picard number one.
Then the following statements are equivalent:
\begin{enumerate}\itemsep=0pt
\item[$(i)$]
$X$ is a non-toric Markov $\CC^*$-surface.
\item[$(ii)$]
We have $\mathcal{K}_X^2 = 9$.
\end{enumerate}
Moreover, if $X$ satisfies $(i)$ or $(ii)$, then it is
determined up to isomorphy by the local class group
orders $\cl(x_1)$, $\cl(x_2)$ of its elliptic fixed points.
\end{thm}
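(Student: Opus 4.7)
The implication $(\mathrm{i}) \Rightarrow (\mathrm{ii})$ is immediate from Theorem \ref{prop:markovsurface}, which already records $\mathcal{K}_{X(\mu)}^2 = 9$ for every Markov $\CC^*$-surface. The content sits in $(\mathrm{ii}) \Rightarrow (\mathrm{i})$. The plan is to bring $X$ into the normal form of Construction \ref{constr:cstarsurf}, transport the identity $\mathcal{K}^2 = 9$ to the toric degenerations produced by Construction \ref{constr:degs}, and then exploit Lemma \ref{lem:diophantic} to extract a Markov structure on the resulting fake weight vectors.

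Concretely, assuming $(\mathrm{ii})$, Proposition \ref{prop:notqs} yields quasismoothness, whence Proposition \ref{prop:allqs} presents $X \cong X(P)$ with $P$ as in Construction \ref{constr:cstarsurf}. The toric limits $\tilde{Z}_1, \tilde{Z}_2$ from Construction \ref{constr:degs} inherit $\mathcal{K}_{\tilde{Z}_i}^2 = 9$ via Proposition \ref{prop:degprops}, so Proposition \ref{prop:fwppKsquare} tells us that both fake weight vectors $w(\tilde{P}_i) = (w_1, w_2, -l_i^2 d_0)$ satisfy the squared Markov identity $(u_0+u_1+u_2)^2 = 9\,u_0u_1u_2$. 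Applying Lemma \ref{lem:diophantic} to both $i$ and using that the first two entries are common to both triples, we obtain positive integers $k_1, k_2, c, m_1, m_2$ with $w_1 = k_1^2$, $w_2 = k_2^2$, $-d_0 = c^2$, $m_i = c\, l_i$, and $(k_1, k_2, m_i)$ a Markov triple for $i = 1, 2$.

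The main obstacle is to force $c = 1$. Since $m_1, m_2$ are the two roots of the common quadratic $t^2 - 3 k_1 k_2 t + (k_1^2 + k_2^2)$, any prime $p$ dividing $\gcd(m_1, m_2)$ divides $3 k_1 k_2$; pairwise coprimality within a Markov triple excludes $p \mid k_j$, and a mod $3$ check (squares mod $3$ lie in $\{0, 1\}$, and the coprimality of Markov entries forbids all three from vanishing mod $3$) rules out $p = 3$. Hence $\gcd(m_1, m_2) = 1$, and since $c$ divides both $m_i$ we conclude $c = 1$, equivalently $d_0 = -1$.

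With $d_0 = -1$, the triples $(l_1, k_1, k_2)$ and $(k_1, k_2, l_2)$ are adjacent Markov triples, and the relation $k_2^2 = w_2 = l_2 d_1 + l_1 d_2$ together with $1 \le d_1 \le l_1$ built into Construction \ref{constr:cstarsurf} matches the uniqueness in Lemma \ref{prop:markovd1d2}, so $P = P(\mu)$ for $\mu = ((l_1, k_1, k_2), (k_1, k_2, l_2))$ and $X \cong X(\mu)$; the non-toricity of $X$ transfers to $X(\mu)$. The uniqueness assertion then runs backwards: Theorem \ref{prop:markovsurface} identifies $\cl(x_1) = k_2^2$ and $\cl(x_2) = k_1^2$, from which $k_1, k_2$ are read off; Lemma \ref{lem:markovchar} determines $l_1, l_2$, Lemma \ref{prop:markovd1d2} determines $d_1, d_2$, and so $P(\mu)$, hence $X(\mu)$, is uniquely determined up to isomorphy.
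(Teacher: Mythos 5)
Your proof is correct and follows essentially the same route as the paper: quasismoothness via Proposition~\ref{prop:notqs}, the normal form from Proposition~\ref{prop:allqs}, passage to the toric degenerations of Construction~\ref{constr:degs} combined with Propositions~\ref{prop:degprops} and~\ref{prop:fwppKsquare} and Lemma~\ref{lem:diophantic} to force $d_0=-1$, and finally the uniqueness in Lemma~\ref{prop:markovd1d2}; your Vieta argument for $c=1$ is only a mild streamlining of the paper's divisibility computation leading to $\delta_0\mid 3l_i$. The one assertion you should make explicit is that $m_1$, $m_2$ really are \emph{the two} roots of $t^2-3k_1k_2t+\bigl(k_1^2+k_2^2\bigr)$ rather than possibly the same root taken twice; this follows from $m_1m_2=c^2l_1l_2=-d_0\,l_1l_2=w_1+w_2=k_1^2+k_2^2$, using the formula for $w(P)$ in Proposition~\ref{prop:cstarsurf}.
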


\begin{proof}
The implication ``(i) $\Rightarrow$ (ii)'' holds
by Theorem~\ref{prop:markovsurface}.
Let~(ii) be valid.
Then Proposition~\ref{prop:notqs} shows
that~$X$ is quasismooth.
Thus, Proposition~\ref{prop:allqs} allows us to
assume $X=X(P)$ with
\begin{gather*}
P
 =
\begin{bmatrix}
-1 & -1 & l_1 & 0
\\
-1 & -1 & 0 & l_2
\\
\hphantom{-} 0 & \hphantom{-}d_0 & d_1 & d_2
\end{bmatrix},
\\
2 \le l_1 \le l_2, \qquad 1 \le d_1 < l_1, \qquad \gcd(l_i,d_i) = 1,
\qquad
d_0 + \frac{d_1}{l_1} + \frac{d_2}{l_2} < 0 < \frac{d_1}{l_1} + \frac{d_2}{l_2}.
\end{gather*}
Recall from Proposition~\ref{prop:cstarsurf}
that the fake weight vector
$w(P)=(w_1,w_2,w_3,w_4)$
of the matrix~$P$ is given explicitly by
\[
w(P)
 =
(-l_1l_2d_0 - l_2d_1 - l_1d_2, l_2d_1 + l_1d_2, -d_0l_2, -d_0l_1)
 \in
\ZZ_{\ge 1}^4.
\]

We first show that $d_0=-1$ holds
and that \smash{$\bigl(w_1,w_2,l_i^2\bigr)$} is a squared
Markov triple \smash{$\bigl(k_1^2,k_2^2,l_i^2\bigr)$}.
Consider the toric degenerations \smash{$\tilde Z_1$}
and \smash{$\tilde Z_2$} of \smash{$X$} as provided by
Construction~\ref{constr:degs}.
Due to Proposition~\ref{prop:degprops},
their fake weight vectors are
$
w\bigl(\tilde P_1\bigr)
 =
\bigl(w_1, w_2, -l_1^2d_0\bigr)$,
$
w\bigl(\tilde P_2\bigr)
 =
\bigl(w_1, w_2, -l_2^2d_0\bigr)$.
Moreover, this Proposition gives us
\smash{$\mathcal{K}_{\tilde Z_i}^2 = \mathcal{K}_X^2 = 9$}.
Consequently, Proposition~\ref{prop:toricmarkovchar}
provides us with Markov triples
$(k_1,k_2,l_i\delta_0)$ such that
$
w\bigl(\tilde P_1\bigr)
 =
\bigl(k_1^2, k_2^2, l_1^2\delta_0^2\bigr)$,
$
w\bigl(\tilde P_2\bigr)
 =
\bigl(k_1^2, k_2^2, l_2^2\delta_0^2\bigr)$,
where $d_0 = - \delta_0^2$. To conclude the first step,
we have to show $\delta_0=1$.
Computing the anticanonical self intersection of $\tilde Z_i$
according to Proposition~\ref{prop:fwppKsquare}, we obtain
\[
9
 =
\frac{\bigl(w_1+w_2+l_i^2\delta_0^2\bigr)^2}{w_1 w_2 l_i^2\delta_0^2}
 =
\frac{\bigl(l_1l_2\delta_0^2+l_i^2\delta_0^2\bigr)^2}{\bigl(l_i^2\delta_0^2-w_2\bigr) w_2 l_i^2\delta_0^2}.
\]
Consequently,
\[
9\bigl(l_i^4\delta_0^2w_2 - l_i^2w_2^2\bigr)
 =
\delta_0^2\bigl(l_1l_2+l_i^2\bigr)^2.
\]
Thus, $\delta_0$ divides $3l_iw_2$.
Since the entries of the squared Markov triple
$\bigl(w_1,w_2,l_i^2\delta_0^2\bigr)$
are pairwise coprime, we have
$\gcd(l_1,l_2) = 1$ and $\delta_0 \mid 3l_i$.
Thus, $\delta_0=1$ or $\delta_0=3$.
The latter is excluded, because no Markov
number is divisible by 3.

Having two members in common, the
Markov triples $(k_1,k_2,l_i)$,
where $i=1,2$, are adjacent.
As seen above, we have $k_2^2 = l_2d_1 + l_1d_2$.
Thus, the uniqueness part of
Lemma~\ref{prop:markovd1d2} shows
that we are in the setting of
Construction~\ref{constr:markovsurface}.
The equivalence of~(i) and (ii) is proven.
The supplement follows directly from
Theorem~\ref{prop:markovsurface}
and, again, the uniqueness statement of
Lemma~\ref{prop:markovd1d2}.
\end{proof}

\begin{thm}
\label{cor:planedegs}
Let $X$ be a log terminal,
rational, projective surface with a non-trivial
torus action.
Then the following statements are equivalent:
\begin{enumerate}\itemsep=0pt
\item[$(i)$]
$X$ is a degeneration of the projective
plane.
\item[$(ii)$]
We have $\rho(X) = 1$ and $\mathcal{K}_X^2 = 9$.
\item[$(iii)$]
$X$ is a toric Markov surface or a
Markov $\CC^*$-surface.
\end{enumerate}
\end{thm}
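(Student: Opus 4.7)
\bigskip

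\noindent\textbf{Proof plan.} My plan is to prove the theorem as a cycle of implications, with Manetti's characterization from \cite{Ma} as the bridge between the analytic and algebraic sides, and with the two preceding characterization results, Proposition~\ref{prop:toricmarkovchar} and Theorem~\ref{thm:markovsurfchar}, doing the main algebraic work. Concretely I will prove (i)~$\Rightarrow$~(ii), (ii)~$\Rightarrow$~(iii), and (iii)~$\Rightarrow$~(i).

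\medskip

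\noindent\emph{Step 1: (i)~$\Rightarrow$~(ii).} If $X$ is a log terminal degeneration of $\PP_2$, then Manetti's characterization forces $\rho(X)=1$. For the canonical self intersection, I use that a~log terminal degeneration $\mathcal{X}\to\Delta$ of $\PP_2$ is $\mathbb{Q}$-Gorenstein, so the relative canonical divisor $K_{\mathcal{X}/\Delta}$ is $\mathbb{Q}$-Cartier; restricting to fibers and using flatness yields $\mathcal{K}_X^2=\mathcal{K}_{\PP_2}^2=9$.

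\medskip

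\noindent\emph{Step 2: (ii)~$\Rightarrow$~(iii).} Here I split by the dimension of the acting torus. Since $X$ is a surface carrying a non-trivial torus action, either a~two-dimensional torus acts effectively, making $X$ toric, or the action factors through a single $\CC^*$. In the toric case, Proposition~\ref{prop:toricmarkovchar} immediately identifies $X$ as a toric Markov surface once we know $\mathcal{K}_X^2=9$. In the non-toric case, $X$ is a log terminal, rational, projective $\CC^*$-surface with $\rho(X)=1$ and $\mathcal{K}_X^2=9$, so Theorem~\ref{thm:markovsurfchar} identifies it as a Markov $\CC^*$-surface.

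\medskip

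\noindent\emph{Step 3: (iii)~$\Rightarrow$~(i).} Here I apply Manetti's criterion: it suffices to verify that $X$ is projective, has Picard number one, has vanishing plurigenera, and carries only singularities of type $\frac{1}{n^2}(1,na-1)$. Projectivity and Picard number one hold by construction in both cases; rationality gives vanishing plurigenera; the required singularity types are provided by Proposition~\ref{prop:tmssingularities} in the toric Markov case and by Theorem~\ref{prop:markovsurface} in the Markov $\CC^*$-case.

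\medskip

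\noindent\emph{Main obstacle.} None of the three steps is conceptually difficult once the earlier characterizations are available; the hardest work has already been absorbed into Proposition~\ref{prop:toricmarkovchar} and Theorem~\ref{thm:markovsurfchar}. The only delicate point is the invariance of $\mathcal{K}^2$ used in Step~1, which requires care because the total space is singular; but this is a standard consequence of log terminality (hence $\mathbb{Q}$-Gorensteinness of the degeneration) together with flatness, and can be handled by a brief reference rather than by a direct computation.
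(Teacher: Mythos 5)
Your overall architecture coincides with the paper's proof: (i)$\Rightarrow$(ii) via Manetti, (ii)$\Rightarrow$(iii) by splitting into the toric case (Proposition~\ref{prop:toricmarkovchar}) and the non-toric $\CC^*$-case (Theorem~\ref{thm:markovsurfchar}), and (iii)$\Rightarrow$(i) by verifying Manetti's criterion using Proposition~\ref{prop:tmssingularities} and Theorem~\ref{prop:markovsurface}. Steps 2 and 3 are fine as written and match the paper.

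The one point that does not hold up as stated is your justification of $\mathcal{K}_X^2=9$ in Step~1. You argue that log terminality of the central fiber makes the family $\mathcal{X}\to\Delta$ $\QQ$-Gorenstein, so that $K_{\mathcal{X}/\Delta}$ is $\QQ$-Cartier and $\mathcal{K}^2$ is constant in the family. But $\QQ$-Gorensteinness is a property of the total space of the degeneration, not of its central fiber, and it is not automatic: a quotient singularity of class $T$ (e.g., $\frac14(1,1)$) admits smoothings that are not $\QQ$-Gorenstein, so a priori a flat family with log terminal central fiber and general fiber $\PP_2$ need not have $\QQ$-Cartier relative canonical divisor. In Manetti's setup a degeneration is merely a proper flat analytic family with $X_t\cong\PP_2$ for $t\ne 0$; that $\mathcal{K}_{X_0}^2=9$ nevertheless holds for log terminal $X_0$ is precisely the content of his Theorem~4 and Corollary~5, which is what the paper cites. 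So either cite those results directly (as the paper does) for both $\rho(X)=1$ and $\mathcal{K}_X^2=9$, or supply an actual argument that the family is $\QQ$-Gorenstein; the deformation-invariance shortcut as you phrase it begs the question.
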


\begin{proof}
Assume that (i) holds.
From~\cite[Theorem~4, Corollary~5]{Ma}, we infer
$\rho(X)=1$ and \smash{$K_X^2 = 9$}.
If~(ii) holds, then Proposition~\ref{prop:toricmarkovchar}
and Theorem~\ref{thm:markovsurfchar}
yield~(iii).
Now, let~(iii) be valid.
Then Proposition~\ref{prop:tmssingularities}
and
Theorem~\ref{prop:markovsurface}
ensure property (d) of Manetti's
Main Theorem~\mbox{\cite[p.~90]{Ma}},
telling us that then $X$ is
a degeneration of the plane.
\end{proof}

\begin{rem}
Note that also non-rational normal $\CC^*$-surfaces
show up as degenerations of the projective plane.
To obtain a classical example, fix $h \in \CC[T_1,T_2,T_3]$
of degree $d$ with $V(h) \subseteq \PP_2$ smooth and
consider
$
\mathcal{X}
 :=
V(T_4S - h)
 \subseteq
\PP(1,1,1,d) \times \CC$,
where $S$ is the coordinate on $\CC$. The projection
$\mathcal{X} \to \CC$ to the last coordinate has fibers
$\mathcal{X}_s \cong \PP_2$ over $s \in \CC^*$ and
the central fiber $\mathcal{X}_0$ is the cone over
the smooth curve $V(h) \subseteq \PP_2$, acted on by~$\CC^*$
via $t \cdot [z] = [z_1,z_2,z_3,tz_4]$.
\end{rem}

\subsection*{Acknowledgements}

We would like to thank Yuchen Liu for fruitful
discussions stimulating this work.
Moreover, we are grateful to Andrea Petracci for
valuable information about existing articles on
the algebraic geometry around the Markov numbers.
Finally, we are indebted to the referees for
carefully going through our manuscript, valuable
suggestions and pointing to a gap in the proof of
a~former, incorrect version of the second main
theorem.

\pdfbookmark[1]{References}{ref}
\LastPageEnding

\end{document}